\documentclass[two side,reqno,11pt]{amsart}
\usepackage{amssymb,latexsym,xy,eucal,mathrsfs}
\usepackage{amssymb,amsmath,graphicx,colortbl,enumerate}
\usepackage{blindtext}
\usepackage{tikz}
\usetikzlibrary{decorations.markings}
\usepackage{pst-plot}
\usepackage{rotating}
\usepackage{pgfplots}
\usepackage{amsmath}
\usepackage{longtable,mathrsfs}
\usepackage[labelfont=bf,justification=raggedright,singlelinecheck=false]{caption}
\captionsetup{labelsep=space,justification=justified,singlelinecheck=off}
\captionsetup{skip=0pt}
\usepackage{caption}
\usepackage{subcaption}
\usepackage{framed}
\usepackage{float}
\setlength{\textheight}{8.5in}
\setlength{\topmargin}{-10 pt}
\setlength{\textwidth}{6in}
\setlength{\oddsidemargin}{.5in} \setlength{\evensidemargin}{.5in}
\setlength{\headsep}{0.5 in}
\numberwithin{equation}{section}
\newtheorem{thm}{Theorem}[section]

\newtheorem{theorem}[thm]{Theorem}
\newtheorem{proposition}[thm]{Proposition}
\newtheorem{example}[thm]{Example}
\newtheorem{remark}{Remark}[section]

\allowdisplaybreaks
\DeclareMathSizes{10}{9}{5}{5}   
\usepackage{mathtools}

\begin{document}
\baselineskip 17truept
\title{Limit cycles of piecewise smooth differential systems with nilpotent center and linear saddle}
\date{}
\author{Nanasaheb Phatangare, Krishnat Masalkar and Subhash Kendre}
\address{Nanasaheb Phatangare, Department of Mathematics, Fergusson College,Pune,
	-411004, India.(M.S.)} \email{nmphatangare@gmail.com}
\address{Krishnat Masalkar, Department~of~Mathematics, Abasaheb ~Garware~ College, Pune-411004, India
	(M.S.)} \email{krishnatmasalkar@gmail.com}

\address{Subhash Kendre, Department~of~Mathematics, Savitribai~Phule~Pune~University, Pune-411007, India
	(M.S.)} \email{sdkendre@yahoo.com,drsubhash2010@gmail.com}

	\makeatletter
\@namedef{subjclassname@2020}{%
	\textup{2020} Mathematics Subject Classification}
\makeatother
	
\subjclass[2020]{37G05, 37G10, 37G15, 37E05, 37G35, 37H20, 37J20}

\maketitle \noindent{ \small \textbf{Keywords}:  Piecewise linear differential system, Hamiltonian, limit cycle.
}

\begin{abstract}
	In this paper, we study the number of limit cycles of a piecewise smooth differential system separated by one or two parallel straight lines or rays formed by a nilpotent center or degenerate center and linear saddle. Piecewise linear differential systems separated by one or two parallel straight lines with one of the subsystems of type nilpotent center and other subsystems of type linear saddle can have at most two limit cycles and there are systems in these classes having one limit cycle. The limit cycle in particular consists of saddle separatrices of the subsystem.
\end{abstract}
\section { Introduction} \label{Ch5s1}
Limit cycles are isolated periodic orbits of the differential system. Study of limit cycles has long history \cite{poincare1891lintegration}. Many real-life phenomena are related to the existence of limit cycles \cite{van1920theory,van1926lxxxviii,zhabotinsky1964bperiodical,pechenkin2004understanding}. Piecewise linear differential systems separated by a straight line appear in mechanics, electrical circuits, economics, control theory, etc.\cite{teixeira2009perturbation,bernardo2008piecewise,simpson2010bifurcations} and study of a such system goes back to mid-twentieth century \cite{andronov1949theory}.
Continuous piecewise linear differential systems (PLDS) separated by one or two parallel straight lines appear in the control theory \cite{atherton200611,narendra2014frequency,henson1997nonlinear,llibre2014introduction}. Planar continuous piecewise linear vector fields with two zones are studied in \cite{freire1998bifurcation,nana2024limit}. A canonical form is obtained and different methods for the creation of periodic orbits are identified and their characteristics are noted. Global properties of continuous piecewise linear vector fields in $\mathbb{R}^2$ are studied in \cite{lum1991global,lum1992global}. Using the higher order averaging theory in \cite{llibre2017limit}, it is shown that the discontinuous quadratic and cubic polynomial perturbations of the linear center have more limit cycles than those of continuous and discontinuous linear perturbations.
Discontinuous PLDS formed by two linear differential systems separated by a straight line can have three limit cycles \cite{cardoso2020simultaneous,de2013limit,freire2014general,llibre2012three}.

In this paper, we discuss limit cycles of piecewise differential systems placed in two zones and systems in three zones. Limit cycles placed in two or three zones can be either sliding limit cycles or crossing limit cycles. The flow of the paper is as follows: In \textit{Section} \ref{Ch5s1}, normal forms of nilpotent center and linear saddle are presented. \textit{Section} \ref{Ch5s2} discusses limit cycles of a piecewise smooth system in two zones and three zones. In \textit{Section} \ref{Ch5s3}, limit cycles of piecewise smooth systems in two and three zones formed by the integrable degenerate center and Hamiltonian saddle are discussed. \textit{Section} \ref{Ch5s4} is dedicated to the piecewise systems separated by rays.

In \cite{llibre2021limit}, it is proved that the continuous piecewise differential system (PDS) separated by one straight line and composed of two linear saddles does not have limit cycles. Also, continuous PDS separated by two parallel straight lines and composed of three linear saddles does not have limit cycles. 
Here we state the results from \cite{llibre2022limit}.
\begin{proposition} Following statements hold for piecewise linear differential systems:
	\begin{enumerate}
		\item A continuous PLDS or discontinuous PLDS formed by one center and one linear Hamiltonian saddle and separated by one straight line has no limit cycles.
		\item A continuous PLDS formed by two centers and one linear Hamiltonian saddle and separated by two parallel straight lines has no limit cycles.
		\item A discontinuous PLDS formed by two centers and one linear Hamiltonian saddle and separated by two parallel straight lines can have at most one limit cycle. Moreover, there are systems in this class.
		\item A continuous PLDS formed by one center and two Hamiltonian saddles and separated by two parallel straight lines has no limit cycles.
		\item A discontinuous PLDS formed by one center and two Hamiltonian saddles and separated by two parallel straight lines can have at most one limit cycle. Moreover, there are systems in this class.
	\end{enumerate}
\end{proposition}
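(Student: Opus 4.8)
\emph{Proof proposal.} The plan is to analyse, in each case, the first‑return map along the separation line(s) obtained by composing the half‑maps of the conic foliations of the zones — ellipses for a linear center (which is automatically Hamiltonian) and hyperbolas for a Hamiltonian saddle — together with the Filippov sliding dynamics on those lines. First I would, by an affine change of variables and a time rescaling, put each separation line in the form $x=\mathrm{const}$ and write the first integral of each zone as a quadratic polynomial: a definite one $Q$ for a center, an indefinite one $H$ for a Hamiltonian saddle. The restriction of such a quadratic to a line $x=c$ has the form $y\mapsto\mu(y-\sigma)^{2}+k$, and $\mu\neq0$ except in the degenerate saddle subcase where $x=c$ is parallel to an asymptote of the foliation, which I would treat separately. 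I extract the only two facts used afterwards: (i) if an orbit arc of a zone leaves the line $x=c$ and returns to it inside that zone, then its endpoints $y_{1},y_{2}$ satisfy $y_{1}+y_{2}=2\sigma$, so the half‑return map of the zone to the line is the affine involution $y\mapsto 2\sigma-y$, \emph{independent of the level}; (ii) if the zone is the strip between $x=-1$ and $x=1$ and an orbit arc crosses it, then the transition map from one line to the other has the form $y\mapsto\sigma'\pm\sqrt{(y-\sigma)^{2}+\kappa}$, i.e.\ it is affinely conjugate to $u\mapsto\pm\sqrt{u^{2}+\kappa}$ (and degenerates to an affine map when $\kappa=0$).

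\textbf{Statement (1).} A limit cycle cannot lie in a single zone, since a linear center has only a non‑isolated period annulus and a Hamiltonian saddle has no periodic orbit. A crossing limit cycle meets the line in two points, and closing it up composes the two half‑return maps of block~(i), one per zone; each is an involution of slope $-1$, so the composition is a translation $y\mapsto y+\delta$, which fixes a point only if $\delta=0$, and then the whole admissible segment is filled by crossing periodic orbits, none isolated. In the continuous case there is no sliding set, which finishes the argument; in the discontinuous case a direct analysis of the Filippov sliding field on the line additionally excludes sliding limit cycles. Hence (1) holds in both cases.

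\textbf{Statements (2)--(5).} A limit cycle contained in two adjacent zones is, after discarding the unused line, a two‑zone configuration of a center and a Hamiltonian saddle, hence excluded by (1); since the two outer zones are not adjacent, any remaining limit cycle must cross \emph{both} lines and therefore visits all three zones, meeting $x=-1$ at $P_{1},P_{2}$ and $x=1$ at $Q_{1},Q_{2}$ (more crossings are ruled out separately using that a linear orbit arc meets a line at most twice). Its four arcs are a half‑return arc of an outer zone on each of the two lines (block~(i)) and two transition arcs through the middle zone (block~(ii)). Writing everything in terms of $\xi=y_{P_{1}}-\sigma$, the closing conditions become one affine relation on $x=-1$, one on $x=1$, and two square‑root identities through the middle strip; eliminating $y_{P_{2}},y_{Q_{1}},y_{Q_{2}}$ and squaring once to remove the nested radical yields a single polynomial equation $p(\xi)=0$ of degree at most two. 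For the discontinuous cases (3) and (5), $p$ has at most two roots, and the inequalities recorded at each squaring step (each $\sqrt{\,\cdot\,}=\text{affine}$ requires the affine side $\geq0$, and each transition arc must actually run inside the middle strip between the prescribed points) discard one candidate, leaving at most one crossing limit cycle; sliding cycles are excluded as in the proof of (1). An explicit one‑parameter family in each class for which $p(\xi)=0$ has exactly one admissible root, whose closed curve is checked to be a genuine crossing cycle containing a saddle separatrix arc, shows the classes are non‑empty. For the continuous cases (2) and (4), continuity of the field across $x=\pm1$ makes the quadratic and linear data of the middle first integral coincide along each line with those of the adjacent outer zone, which collapses the first‑return map to a conjugate of a translation — equivalently, forces $p\equiv0$ or $p$ with no admissible root — so there is no isolated periodic orbit.

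\textbf{Main obstacle.} The delicate point is the sign and branch bookkeeping for the transition maps of block~(ii): for each level one must decide which branch of the radical is realised by the flow, and which of the at most two roots of $p$ corresponds to a closed curve that genuinely stays in the prescribed zones and crosses the lines transversally. This is exactly what upgrades ``at most two candidates'' to the sharp counts ``at most one'' in (3) and (5) and ``none'' in (2) and (4), and it is the constraint the explicit examples must be engineered to meet.
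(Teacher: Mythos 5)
First, a contextual point: this proposition is not proved in the paper at all --- it is quoted verbatim from \cite{llibre2022limit} (``Here we state the results from \cite{llibre2022limit}''), so there is no in-paper proof to compare against. Your overall strategy --- affine normalization, restriction of the quadratic first integrals to the separation lines, and closing equations obtained by matching levels at the crossing points --- is exactly the technique this paper itself uses in its Theorems of Sections \ref{Ch5s2} and \ref{Ch5s3}, and it is the right framework for the cited result. Your treatment of statement (1) (two involutions $y\mapsto 2\sigma-y$ composing to a translation, hence no isolated crossing orbit) is essentially complete, modulo the degenerate case $\delta=0$ that you correctly flag.

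However, for statements (2)--(5) there is a genuine gap at precisely the decisive step. Your elimination yields a polynomial $p(\xi)$ of degree at most two, i.e.\ \emph{at most two} candidate crossing cycles; the proposition claims \emph{at most one} in (3) and (5) and \emph{none} in (2) and (4). You assert that the branch/sign bookkeeping ``discards one candidate'' and that continuity ``forces $p\equiv0$ or $p$ with no admissible root,'' but no argument is given for either claim --- indeed you yourself label this the main obstacle. A priori both roots of a quadratic can satisfy the ordering and zone-confinement constraints, so the sharpening from two to one (and to zero in the continuous cases, where one must actually compute how continuity across $x=\pm1$ ties the coefficients of adjacent Hamiltonians and show the closing system degenerates into a continuum) must be carried out explicitly; as written the proposal only proves ``at most two.'' Two further, smaller gaps: the existence halves of (3) and (5) (``there are systems in this class'') require exhibiting and verifying a concrete example, which you only promise; and the reduction of a cycle lying in two adjacent zones to statement (1) does not literally apply when the adjacent zones are center--center (cases (2),(3)) or saddle--saddle (cases (4),(5)) --- the same involution argument handles these pairs, but it must be said, since (1) as stated covers only the center--saddle pair.
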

In \cite{phatangare2024limit}, limit cycles of PDS of the type nonlinear centre and saddle are discussed.
Here, we discuss the limit cycles of PDS placed in two zones and three zones and formed by the global nilpotent center and linear saddles.

The normal forms of nilpotent center at the origin are mentioned in the following result from \cite{dias2018polynomial}.
\begin{theorem}\label{Ch5th1} \cite{dias2018polynomial}
	Every degree three planar Hamiltonian polynomial vector field with a global nilpotent center at the origin, symmetric to the $x$-axis and with all infinite singular points being non-degenerated hyperbolic sectors, after a linear change of variables, can be written in one of the following forms:
	\begin{align}
		(\dot{x},\dot{y})=&( y, -x^3),\label{Ch5eq11}\\
		(\dot{x},\dot{y})=&(y+y^3,-x^3), \label{Ch5eq12}\\
		(\dot{x},\dot{y})=& (y+x^2y+ay^3,-x^3-xy^2) ~~~\text{with}~a\geq 0, \label{Ch5eq13}\\
		(\dot{x},\dot{y})=&(y-x^2y+ay^3, -x^3+xy^2)~ \text{with}~a\geq 1, \label{Ch5eq14}\\
		(\dot{x},\dot{y})=&(y+2xy+ax^2y+by^3,-x^3-y^2-axy^2)\nonumber\\
		&~\text{with}~ b>0~\text{and either}~a\geq 1,\nonumber\\
		&~\text{or}~ a<1~\text{with}~4(a-1)^2(a^3-a^2-ab-8a)-27b^2>0. \label{Ch5eq15} \end{align}
\end{theorem}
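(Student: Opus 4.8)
The plan is to read off the Hamiltonian from the hypotheses, one coefficient at a time, and then split into cases. Write $X=(P,Q)$ with $P=H_y$, $Q=-H_x$ for a polynomial $H$; since $X$ has degree three, $H$ has degree four, and the symmetry with respect to the $x$-axis (invariance of the orbits under $(x,y,t)\mapsto(x,-y,-t)$) forces $H(x,-y)=H(x,y)$, so $H=A(x)+B(x)y^{2}+Cy^{4}$ with $\deg A\le4$, $\deg B\le2$ and $C\in\mathbb{R}$. Translating so that $H(0,0)=0$ and using that the origin is a singular point with a nonzero nilpotent linear part gives $A(0)=A'(0)=A''(0)=0$ and $B(0)\ne0$, hence $A(x)=a_3x^{3}+a_4x^{4}$. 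Monodromy of the origin forces $a_3=0$: if $a_3\ne0$ the leading term of $-A'(x)$ has even degree, so $\dot y$ keeps one sign on both sides of the origin along the $x$-axis and the origin cannot be surrounded by orbits; the orientation of the flow then gives $a_4>0$ (after possibly replacing $H$ by $-H$). The only linear changes of variables preserving the reversibility about the $x$-axis are the diagonal ones $(x,y)\mapsto(\alpha x,\beta y)$, which together with time rescaling furnish a three-parameter rescaling group; using it to set $a_4=\tfrac14$ and $B(0)=\tfrac12$ puts $H$ in the shape
\[
H(x,y)=\frac{x^{4}}{4}+\Bigl(\tfrac12+b_1x+b_2x^{2}\Bigr)y^{2}+Cy^{4},
\]
with one scaling still free (acting by $(b_1,b_2,C)\mapsto(b_1/\alpha,\,b_2/\alpha^{2},\,C/\alpha^{4})$) and the discrete flip $x\mapsto-x$ available.

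Next I would impose that the center is global. For a degree-four Hamiltonian this forces the origin to be a global minimum of $H$, to be its only critical point, and $H$ to be proper, so that every level curve $\{H=c\}$, $c>0$, is a single oval. Splitting on the free coefficients produces the five families. If $b_1=0$ and $b_2=0$ the residual scaling fixes $C$ up to a positive factor: $C=0$ gives \eqref{Ch5eq11}, while $C\ne0$ must be $C>0$ (otherwise $H$ is unbounded below in $y$) and normalizes to \eqref{Ch5eq12}. If $b_1=0$ and $b_2>0$, normalize $b_2=\tfrac12$; boundedness below forces $C\ge0$ and one gets \eqref{Ch5eq13} with $a=4C$. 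If $b_1=0$ and $b_2<0$, normalize $b_2=-\tfrac12$; now $x\mapsto\min_{y}H(x,y)$ behaves like $\tfrac{x^{4}}{4}-\tfrac{(x^{2}-1)^{2}}{16C}$ for $|x|\ge1$, which is proper and bounded below exactly when $C\ge\tfrac14$, i.e.\ $a=4C\ge1$, giving \eqref{Ch5eq14}. Finally, if $b_1\ne0$ use the last scaling and the flip to set $b_1=1$, which gives the shape \eqref{Ch5eq15} with $b_2=a/2$, $C=b/4$; boundedness below forces $b>0$, and the origin is the unique critical point precisely when the cubic in $x$ obtained by eliminating $y$ from $H_x=H_y=0$ has no real root lying in $\{1+2x+2b_2x^{2}<0\}$. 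That region is empty exactly when $b_2\ge\tfrac12$, i.e.\ $a\ge1$; when $a<1$ it is a bounded interval, and the requirement ``no root there'' becomes, after computing the discriminant of that cubic, the inequality $4(a-1)^{2}(a^{3}-a^{2}-ab-8a)-27b^{2}>0$.

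It remains to bring in the hypothesis on infinity, both to confirm that nothing outside these families survives and to deal with the boundary parameter values. I would pass to the Poincaré compactification: the singular points at infinity in the usual chart are the zeros of $\mathcal{F}(u)=Q_3(1,u)-uP_3(1,u)=-(4Cu^{4}+4b_2u^{2}+1)$ together with the endpoint of the $y$-direction, and since $P_3$ carries the factor $y$ (or vanishes) these points are non-elementary, so their local phase portraits must be found by directional blow-ups. Requiring that each one consist only of hyperbolic sectors is the condition to be verified for every form on the list, and it is in particular what must be checked at the degenerate boundary cases such as $a=1$ in \eqref{Ch5eq14}, where $\mathcal{F}=-(u^{2}-1)^{2}$ has double roots. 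Conversely, any admissible field lies in one of the five families by the normal-form reduction above, so the classification is complete.

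The step I expect to be the real obstacle is the case $b_1\ne0$: deciding which real solutions of $H_x=H_y=0$ are genuine (that is, give $y^{2}>0$), keeping precise track of the interval $\{1+2x+2b_2x^{2}<0\}$, and carrying the resultant/discriminant computation far enough to recognize $4(a-1)^{2}(a^{3}-a^{2}-ab-8a)-27b^{2}$ is a long and error-prone calculation. Running the blow-up analysis of the non-elementary infinite singular points for all five families, and confirming that they yield only hyperbolic sectors on exactly the stated parameter ranges, is the other substantial piece of work.
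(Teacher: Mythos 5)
You are comparing against a proof that is not in the paper: Theorem \ref{Ch5th1} is quoted verbatim from \cite{dias2018polynomial} and the authors give no argument for it, so your attempt has to be judged against what a complete classification proof requires. Your overall strategy is the right one and matches the cited source in outline: reversibility forces $H=A(x)+B(x)y^{2}+Cy^{4}$, the monodromy (Andreev) condition kills the cubic term of $A$, diagonal rescalings normalize $a_{4}=\tfrac14$, $B(0)=\tfrac12$, and the residual scaling plus the flip $x\mapsto -x$ produce exactly the Hamiltonians $F_{1},\dots,F_{5}$ listed after the theorem, with $b_{1}=1$, $b_{2}=a/2$, $C=b/4$ in the last family.

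The gap is that the actual content of the theorem lies in the two steps you defer, and the sketch of one of them is already off. (i) For family \eqref{Ch5eq15}, with $b>0$ the stated inequality $4(a-1)^{2}(a^{3}-a^{2}-ab-8a)-27b^{2}>0$ forces $a(a^{2}-a-b-8)>0$, which fails for all $0\le a<1$; so the only relevant subcase of ``$a<1$'' is $a<0$, and there the set $\{B(x)<0\}=\{1+2x+ax^{2}<0\}$ is not a bounded interval but the complement of one. Hence the root-location argument as you describe it does not go through, and the resultant/discriminant computation that produces the displayed inequality is exactly the work that is missing. (ii) The ranges $a\ge 0$ in \eqref{Ch5eq13}, $a\ge 1$ in \eqref{Ch5eq14} and $b>0$ in \eqref{Ch5eq15} are certified only by showing that properness of $H$ with a unique critical point is equivalent to the global-center hypothesis (a Morse-type argument you assert but do not prove) \emph{and} by the directional blow-ups of the degenerate infinite singular points, which decide the boundary values; you name this step but do not carry it out for any family. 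Finally, the parenthetical ``after possibly replacing $H$ by $-H$'' is not available, since it destroys the normalization $B(0)=\tfrac12$; the sign $a_{4}>0$ follows instead from the nilpotent center condition itself. As written, this is a plausible plan whose decisive computations are acknowledged but absent, not a proof of the statement.
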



The normal form of the planar linear Hamiltonian saddle is proved in \cite{llibre2021limit}. Here, we state the result.
\begin{theorem}\label{Ch5th2}\cite{llibre2021limit}
	Let $\Delta=\alpha\delta-\beta^2<0, u=\beta\mu+\delta \gamma<0, v=\alpha\mu +\beta \gamma.$
	A Hamiltonian planar linear system having a saddle is topologically conjugate to
	\begin{align}\label{Ch5eq16}
		\dot{X}=\begin{cases}-\beta x-\delta y+\mu\\
			\alpha x+\beta y+\gamma \end{cases},
	\end{align}
	where $\alpha=0$ or $1$. Further, if $\alpha=0$ then $\gamma=0,\beta \neq 0$, and if $\alpha=1$ then $\delta <\beta^2$.\\ 
	Moreover, if the saddle point for the system (\ref{Ch5eq16}) is $(x_0,y_0)$, then points of intersection of its separatrices with the $y$-axis are $(0,A)$ and $(0,B)$, where
	\begin{align} \label{Ch5eqn1.7}
		x_0=-\frac{u}{\Delta},y_0=\frac{v}{\Delta},A=y_0+\dfrac{\beta +\sqrt{-\Delta}}{\delta}x_0, ~\text{and}~ B
		=  y_0+\dfrac{\beta -\sqrt{-\Delta}}{\delta}x_0.
	\end{align}
\end{theorem}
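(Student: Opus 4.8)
The plan is to begin with the most general planar linear system $\dot X = MX + c$, with $M\in\mathbb{R}^{2\times 2}$ and $c\in\mathbb{R}^2$, and to extract from the Hamiltonian hypothesis the single constraint $\operatorname{tr} M = 0$, so that $M=\left(\begin{smallmatrix} a & b \\ c_{21} & -a\end{smallmatrix}\right)$ and the Hamiltonian is the quadratic $H=-\tfrac{c_{21}}{2}x^{2}+axy+\tfrac{b}{2}y^{2}$ plus linear terms. A linear equilibrium is a saddle exactly when its eigenvalues are real of opposite sign, i.e. when $\det M<0$, so the saddle hypothesis reads $a^{2}+b\,c_{21}>0$. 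From here one normalizes by an affine change of coordinates $Y=TX+k$; since linear isomorphisms and translations are homeomorphisms conjugating the two flows, the resulting systems are (more than) topologically conjugate.

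I would then split into two cases according to whether the lower-left entry $c_{21}$ vanishes. If $c_{21}\neq 0$, the diagonal rescaling $T=\operatorname{diag}(c_{21},1)$ conjugates the system to one whose matrix has lower-left entry $1$ and trace $0$; writing $\alpha=1$, $\beta=-a$, $\delta=-c_{21}b$ puts it in the form (\ref{Ch5eq16}), and $\det M<0$ becomes $\delta-\beta^{2}<0$, i.e. $\delta<\beta^{2}$. If $c_{21}=0$, then $\det M=-a^{2}<0$ forces $a\neq 0$, the second equation is $\dot y=-ay+c_{2}$, and the translation $y\mapsto y-c_{2}/a$ (which affects only the constant term of $\dot x$) removes that constant, yielding (\ref{Ch5eq16}) with $\alpha=0$, $\beta=-a\neq 0$, and $\gamma=0$. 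In both cases the remaining constants are absorbed into $\mu,\gamma$, and the quantities $\Delta=\alpha\delta-\beta^{2}<0$, $u=\beta\mu+\delta\gamma$, $v=\alpha\mu+\beta\gamma$ are read off the normalized system; a sign normalization of $u$, if required, is obtained from the anti-symplectic reflection $(x,y)\mapsto(-x,y)$, which preserves the class of Hamiltonian saddles.

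For the ``moreover'' part I would work directly with (\ref{Ch5eq16}), where $M=\left(\begin{smallmatrix}-\beta & -\delta\\ \alpha & \beta\end{smallmatrix}\right)$ has $\det M=\Delta$. Solving $MX=-c$ by Cramer's rule gives $x_{0}=-u/\Delta$ and $y_{0}=v/\Delta$. The characteristic polynomial is $\lambda^{2}+\Delta$, so the eigenvalues are $\pm\sqrt{-\Delta}$ (real, opposite sign, confirming the saddle), with eigenvector for $\lambda$ taken as $\bigl(1,-(\beta+\lambda)/\delta\bigr)$. Each separatrix is the line through $(x_{0},y_{0})$ with an eigenvector as direction vector; moving the parameter amount $-x_{0}$ along the line with direction $\bigl(1,-(\beta\pm\sqrt{-\Delta})/\delta\bigr)$ reaches $\{x=0\}$ at ordinate $y_{0}+\dfrac{\beta\pm\sqrt{-\Delta}}{\delta}\,x_{0}$, and these are precisely $A$ and $B$.

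The computations themselves are routine; the only care points are that the changes of variables really are conjugacies of the flows (immediate here, as they are affine) and that the formulas for $A$ and $B$ presuppose $\delta\neq 0$ — when $\alpha=0$ and $\delta=0$ one separatrix is vertical and meets the $y$-axis only exceptionally, but $\delta\neq 0$ is the situation relevant to the rest of the paper. I expect the bookkeeping in separating the cases $\alpha=0$ and $\alpha=1$ and in matching all the side conditions simultaneously to be the main (mild) obstacle.
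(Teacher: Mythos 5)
The paper itself contains no proof of this theorem: it is quoted from \cite{llibre2021limit} (``The normal form of the planar linear Hamiltonian saddle is proved in \cite{llibre2021limit}. Here, we state the result.''), so there is no in-paper argument to compare against. Your reconstruction is correct and is the standard route, presumably the same as in the cited source: trace zero characterizes linear Hamiltonian systems, $\det M<0$ characterizes the saddle, affine changes of variables (smooth, hence topological, conjugacies) reduce the lower-left entry to $0$ or $1$ and, when it is $0$, eliminate the constant $\gamma$, with $\delta<\beta^2$ (resp.\ $\beta\neq 0$) just restating $\det M<0$ in the new parameters; the ``moreover'' part is Cramer's rule for the equilibrium, $x_0=-u/\Delta$, $y_0=v/\Delta$, together with the eigendirections $\bigl(1,-(\beta\pm\sqrt{-\Delta})/\delta\bigr)$ of $\lambda^2+\Delta=0$, which give exactly $A$ and $B$. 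Two small remarks. First, your sign normalization of $u$ via the reflection $(x,y)\mapsto(-x,y)$ does not stay inside the normal form when $\alpha=1$ (it sends $\alpha$ to $-\alpha$); use the point reflection $(x,y)\mapsto(-x,-y)$ instead, which keeps $\alpha,\beta,\delta$, negates $\mu,\gamma$, and hence negates $u$ while remaining Hamiltonian. Second, your caveat that the formulas for $A$ and $B$ presuppose $\delta\neq 0$ is correct and is implicit in the statement (where $\delta$ sits in denominators); when $\delta=0$ one separatrix is vertical, but $\delta\neq 0$ is the case used in the rest of the paper.
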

\begin{remark}
	Two separatrices of the system (\ref{Ch5eq16}) intersect the y-axis on the opposite side of the origin if and only if $A$ and $B$ have the opposite signs.
	Observe that, $A$ and $B$ have opposite signs if and only if $AB<0$, which amounts to say that $\dfrac{y_0}{x_0}<\beta^2-\alpha\delta+\dfrac{\beta}{\delta}.$
\end{remark} 
Hamiltonians for the systems (\ref{Ch5eq11})-(\ref{Ch5eq16}) are given by, respectively,
\begin{align}
	F_1(x,y)=&\dfrac{y^2}{2}+\dfrac{x^4}{4},\\
	F_2(x,y)=&\dfrac{y^2}{2}+\dfrac{y^4}{4}+\dfrac{x^4}{4},\\
	F_3(x,y)=&\dfrac{x^4}{4}+\dfrac{x^2y^2}{2}+\dfrac{y^2}{2}+\dfrac{ay^4}{4},\\
	F_4(x,y)=&\dfrac{x^4}{4}-\dfrac{x^2y^2}{2}+\dfrac{y^2}{2}+\dfrac{ay^4}{4},\\
	F_5(x,y)=&\dfrac{x^4}{4}+a\dfrac{x^2y^2}{2}+xy^2+\dfrac{y^2}{2}+\dfrac{by^4}{4},\\
	H(x,y)=&-\dfrac{1}{2}\alpha x^2-\dfrac{1}{2}\delta y^2-\beta xy -\gamma x+\mu y.
\end{align} 
Now consider a piecewise smooth differential system separated by a straight line;
\begin{align}
	\dot{X}=(\dot{x},\dot{y})=\begin{cases}
		({F}_{y}(x,y),-{F}_{x}(x,y))& \text{if}~x<0\\
		(H_y(x,y),-H_x(x,y))&\text{if}~x>0,
	\end{cases} \label{Ch5eq114}
\end{align}
where $F_{x}=\dfrac{\partial F}{\partial x}, F_{y}=\dfrac{\partial F}{\partial y}$ with $F=F_i$ for $i=1,2,3,4,5$.
\section{Piecewise smooth Hamiltonian systems formed by the nilpotent center and linear saddle.}\label{Ch5s2}
Limit cycles of a piecewise differential system placed in two zones and one in three zones formed by the linear center and Hamiltonian saddle are discussed in \cite{llibre2021limit,llibre2022limit}. In this section, we discuss the number of limit cycles of a piecewise smooth Hamiltonian differential system separated by one straight line formed by a nilpotent center and a Hamiltonian saddle and a system separated by two straight lines formed by a nilpotent center and two saddles. 
\begin{theorem}\label{Ch5th5.3}
	Let $k=a$ or $b$.
	\begin{enumerate}
		\item The system (\ref{Ch5eq114})  has a period annulus around the origin if and only if $\mu=0$ and $k\geq 0$.
		\item  The system (\ref{Ch5eq114}) has no limit cycle if and only if  $-\dfrac{\delta^2}{\mu^2}<k<0$. 
		\item The system (\ref{Ch5eq114}) has one limit cycle if and only if  $-\dfrac{\delta^2}{\mu^2}=k$.
		\item The system (\ref{Ch5eq114}) has two limit cycles if and only if  $k<-\dfrac{\delta^2}{\mu^2}$.
	\end{enumerate} 
\end{theorem}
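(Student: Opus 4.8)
The plan is to reduce the question to a one-dimensional return-map analysis on the switching line $\Sigma=\{x=0\}$, exploiting that each half of (\ref{Ch5eq114}) is Hamiltonian. Put $\phi(y):=F_i(0,y)=\tfrac{y^{2}}{2}+\tfrac{k}{4}y^{4}$ and $\psi(y):=H(0,y)=-\tfrac{\delta}{2}y^{2}+\mu y$. Since the Hamiltonian is constant along orbits, an orbit arc of the left subsystem joining $(0,p)$ and $(0,q)$ must satisfy $\phi(p)=\phi(q)$, and one of the right subsystem must satisfy $\psi(p)=\psi(q)$; as $\phi$ is even these give, for $p\ne q$, either $q=-p$ or (only when $k<0$) $p^{2}+q^{2}=-2/k$, while the second gives $p+q=2\mu/\delta$. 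First I would fix the left-hand phase portrait: the nilpotent center at the origin is encircled by ovals meeting $\Sigma$ in pairs $(0,-r),(0,r)$, and when $k<0$ the left subsystem acquires extra critical features on $\Sigma$ at $y=\pm(-k)^{-1/2}$ (equilibria for $F_3,F_4$, a tangency for $F_5$, since $F_{i,x}(0,y)\equiv 0$ for $i\le4$ while $F_{5,x}(0,y)=y^{2}$), the oval family terminating at the level $\phi=M:=-1/(4k)$ along the heteroclinic separatrix connecting those points — the separatrices mentioned in the abstract. I would then split $\Sigma$ into crossing and sliding sub-arcs according to $\mathrm{sign}\,[F_{i,y}(0,y)\,H_y(0,y)]$ and write down the Filippov sliding field on the sliding sub-arcs.

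For statement (1): a period annulus of (\ref{Ch5eq114}) is a one-parameter family of periodic orbits. A crossing periodic orbit alternates left arcs (whose two $\Sigma$-heights sum to $0$) with right arcs (heights summing to $2\mu/\delta$); summing around the loop forces $0=2\mu/\delta$, hence $\mu=0$; and a one-parameter family cannot contain sliding segments. Conversely, if $\mu=0$ then $\psi(-r)=\psi(r)$ automatically, so each center oval closes up through a matching right arc, and these orbits fill an annulus around the origin exactly when the whole oval family meets $\Sigma$ transversally, i.e. when $k\ge 0$; for $k<0$ the family is truncated at level $M$, and one checks the resulting picture is not a period annulus around the origin and that no sliding annulus appears. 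This establishes (1).

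For (2)–(4), assume $\mu\ne 0$. The summation argument again forbids crossing periodic orbits, so every limit cycle must contain a sliding segment. I would build the first-return map $\Pi$ on the crossing arc of $\Sigma$ adjoining the relevant sliding arc: traverse a center oval from $(0,-r)$ to $(0,r)$, then follow the sliding flow (with its explicit formula) and, where needed, a right arc back to the starting fibre, and express $\Pi(r)-r$ in closed form through $\phi,\psi$ and the sliding field. The fixed-point equation $\Pi(r)=r$ reduces to a single scalar equation whose admissible roots are controlled by the sign of a quantity proportional to $k+\delta^{2}/\mu^{2}$: none when $-\delta^{2}/\mu^{2}<k<0$ (case (2)), a unique double root — a semistable limit cycle — when $k=-\delta^{2}/\mu^{2}$ (case (3)), and two simple roots when $k<-\delta^{2}/\mu^{2}$ (case (4)). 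I would finish by checking that each root yields a genuine periodic orbit (sliding segment inside the sliding region, endpoints at tangency/boundary points, consistent orientation, and, for $k$ at or below the threshold, the outer cycle running along the saddle separatrices as the abstract asserts) and that $\Pi'$ there has the sign excluding any further limit cycle.

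The main obstacle I expect is the last step: proving the reduced scalar displacement function has exactly the asserted number of zeros uniformly in $\delta,\mu,k$, with threshold exactly $k=-\delta^{2}/\mu^{2}$. This amounts to a monotonicity/convexity estimate for $\Pi(r)-r$ on the relevant $r$-interval, made delicate by the degeneracy of the Filippov field at the interval's endpoints (the boundary equilibria $(0,0)$ and, when $k<0$, $(0,\pm(-k)^{-1/2})$) and by the bookkeeping of orientations in the concatenation left-flow $\to$ sliding $\to$ right-flow near those points. A secondary obstacle is unifying the five normal forms $F_1,\dots,F_5$ and the sub-cases $\alpha=0,1$ of the saddle (\ref{Ch5eq16}); I expect this to become tractable only after each is reduced to the common data $\bigl(\phi(y)=\tfrac{y^{2}}{2}+\tfrac{k}{4}y^{4},\ \psi(y)=-\tfrac{\delta}{2}y^{2}+\mu y\bigr)$ on $\Sigma$.
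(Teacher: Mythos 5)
Your reduction to the switching line is exactly the paper's first step: along a crossing periodic orbit the two intersection heights $y_1,y_2$ must satisfy $F(0,y_1)=F(0,y_2)$ and $H(0,y_1)=H(0,y_2)$, and you correctly extract the two branches, namely $y_1=-y_2$, or (only for $k<0$) $k(y_1^2+y_2^2)+2=0$, together with $y_1+y_2=2\mu/\delta$. The genuine gap is what you do next for statements (2)--(4): you assert that for $\mu\neq 0$ the ``summation argument'' forbids crossing periodic orbits, so that every limit cycle must carry a sliding segment. That argument uses only the branch $y_1=-y_2$, which is forced when $k\geq 0$; but (2)--(4) concern $k<0$, where the second branch is available and is precisely the mechanism the paper exploits. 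Combining $y_1^2+y_2^2=-2/k$ with $y_1+y_2=2\mu/\delta$ (a circle intersected with a line) gives the quadratic $y_1^2-\tfrac{2\mu}{\delta}y_1+\tfrac{2\mu^2}{\delta^2}+\tfrac{1}{k}=0$, and the entire count in (2)--(4) is read off from its discriminant, whose sign is governed by $k\mu^2+\delta^2$, i.e.\ by the position of $k$ relative to $-\delta^2/\mu^2$. By discarding this branch you lose the limit cycles the theorem is about, and you replace the paper's elementary root count by a Filippov return map through a sliding segment whose decisive property (zero, one double, or two simple fixed points with threshold exactly $k=-\delta^2/\mu^2$) is nowhere established; you yourself flag it as the main obstacle. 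As written, the proposal therefore contains no proof of (2)--(4): its starting dichotomy is false and the substitute analysis is unfinished.

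Two further cautions. In (1) your necessity argument ($\mu=0$) matches the paper, but the converse claim that for $k<0$ the truncated oval family fails to give a period annulus around the origin is asserted rather than checked (the paper is equally brief on this point). And if you repair (2)--(4) along the crossing-orbit route, handle the inequalities and admissibility with care: the quadratic above has real roots precisely when $k\geq-\delta^2/\mu^2$ (compare the distance from the origin to the line $y_1+y_2=2\mu/\delta$ with the radius $\sqrt{-2/k}$), so matching the number of real roots to the number of actual limit cycles requires verifying which roots correspond to genuine orbits, i.e.\ left arcs that really close through $x<0$ below the separatrix level through $(0,\pm(-k)^{-1/2})$ and right arcs that return through $x>0$. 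This sufficiency step is absent from your plan and is treated only implicitly in the paper, so it is exactly where your extra geometric observations (the equilibria or tangency of the left subsystem on $\Sigma$ when $k<0$) should be put to work.
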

\begin{proof}
	If the system (\ref{Ch5eq114}) has a periodic solution passing through $(0,y_1)$ and $(0,y_2)$ with $y_2<0<y_1$, then  $(0,y_1)$ and $(0,y_2)$ lies on the same level curve of the first integrals  $F=F(x,y)$ and $H=H(x,y)$. Hence, we have 
	$F(0,y_1)=F(0,y_2)  ~\text{and}~H(0,y_1)=H(0,y_2).$ 
	Therefore,
	\begin{align}\label{Ch5eq115}
		\left( y_{{1}}-y_{{2}} \right)  \left( y_{{1}}+y_{{2}} \right) 
		\left( k{y_{{1}}}^{2}+k{y_{{2}}}^{2}+2 \right)=0 
		~~\text{and}~\left( y_{{1}}-y_{{2}} \right)  \left( \delta\,y_{{1}}+\delta\,
		y_{{2}}-2\,\mu \right) 
		=0.
	\end{align}
	Since $y_1\neq y_2$, we have  
	\begin{align}\label{Ch5eq22}
		\left( y_{{1}}+y_{{2}} \right) 
		\left( k{y_{{1}}}^{2}+k{y_{{2}}}^{2}+2 \right)=0 
		~~\text{and}~  y_{{1}}+
		y_{{2}}=\frac{2\mu}{\delta}.
	\end{align}
	If $k\geq 0$,  then $y_1=-y_2$ and $\mu=0$. Hence, the system (\ref{Ch5eq114}) has a period annulus around the origin and is bounded by the separatrices of (\ref{Ch5eq16}).\\
	Now, assume that $k<0$ and that $\mu\neq 0$.
	Eliminating $y_2$ from the equations in (\ref{Ch5eq22}), we get
	\begin{align}\label{Ch5eq23}
		\frac{2\mu}{\delta}\left( y_1^2+\left( \frac{2\mu}{\delta}-y_1\right)^2+\frac{2}{k}\right)=0.
	\end{align}
	Solving (\ref{Ch5eq23}) for $y_1$, we get
	$$y_{{1}}={\frac {k\mu\pm\sqrt {-{k}^{2}{\mu}^{2}-k{\delta}^{2}}}{\delta\,
			k}}.
	$$
	Hence, if $-\dfrac{\delta^2}{\mu^2}<k<0$ then the system (\ref{Ch5eq114}) has no limit cycle, if $-\dfrac{\delta^2}{\mu^2}=k$ then system (\ref{Ch5eq114}) has one limit cycle, and if $k<-\dfrac{\delta^2}{\mu^2}$ then the system (\ref{Ch5eq114}) has two limit cycles.
\end{proof}

\begin{example}
	Consider the particular cases of system (\ref{Ch5eq114}) when $\mu=0$ and $k\geq 1.$
	\begin{enumerate}
		\item $F=F_1$:
		\begin{align}\label{Ch5s5.18}
			\dot{X}=\begin{cases}
				(y,-x^3),& \text{if}~x<0\\
				(x+y,-x-y-1),& \text{if}~x>0.
			\end{cases}
		\end{align}
		\item $F=F_2$:
		\begin{align}\label{Ch5s5.19}
			\dot{X}=\begin{cases}
				(y+y^3,-x^3),& \text{if}~x<0\\
				(x+y,-x-y-1),& \text{if}~x>0.
			\end{cases}
		\end{align}
		\item $F=F_3,a=1$:
		\begin{align}\label{Ch5s5.20}
			\dot{X}=\begin{cases}
				(y+x^2y+y^3,-x^3-xy^2),& \text{if}~x<0\\
				(x+y,-x-y-1),& \text{if}~x>0.
			\end{cases}
		\end{align}
		\item $F=F_4,a=1$:
		\begin{align}\label{Ch5s5.21}
			\dot{X}=\begin{cases}
				(y-x^2y+y^3,-x^3+xy^2),& \text{if}~x<0\\
				(x+y,-x-y-1),& \text{if}~x>0.
			\end{cases}
		\end{align}
		\item $F=F_5,a=b=1$:
		\begin{align}\label{Ch5s5.22}
			\dot{X}=\begin{cases}
				(y+2xy+x^2y+y^3,-x^3-y^2-xy^2),& \text{if}~x<0\\
				(x+y,-x-y-1),& \text{if}~x>0.
			\end{cases}
		\end{align}
	\end{enumerate}
\end{example}
From Part (1) of Theorem (\ref{Ch5th5.3}), each of the systems (\ref{Ch5s5.18})-(\ref{Ch5s5.22}) has a period annulus, see Figure (\ref{Ch5Fig5.2}).

\begin{framed}
	\begin{figure}[H]
		\subfloat[Flow of system (\ref{Ch5eq114}), $F=F_1,\mu\neq 0$]{\includegraphics[width=.29\linewidth]{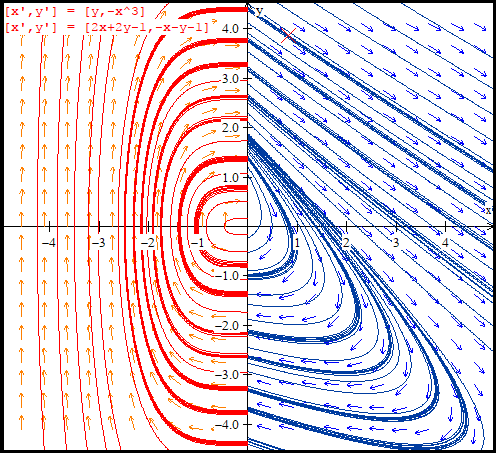}}
		\qquad
		\subfloat[Flow of System (\ref{Ch5eq114}), $F=F_2,\mu\neq 0$]{\includegraphics[width=.29\linewidth]{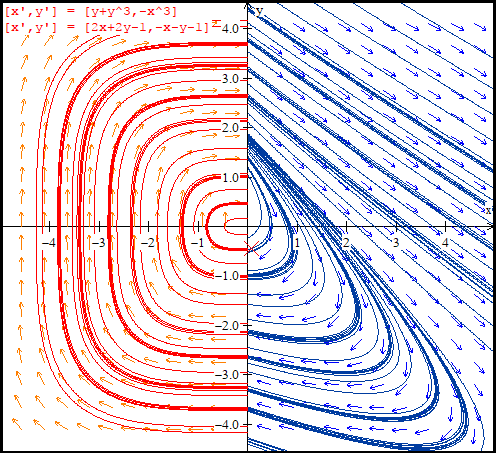}}
		\qquad
		\subfloat[Flow of system (\ref{Ch5s5.23})]{\includegraphics[width=.29\linewidth]{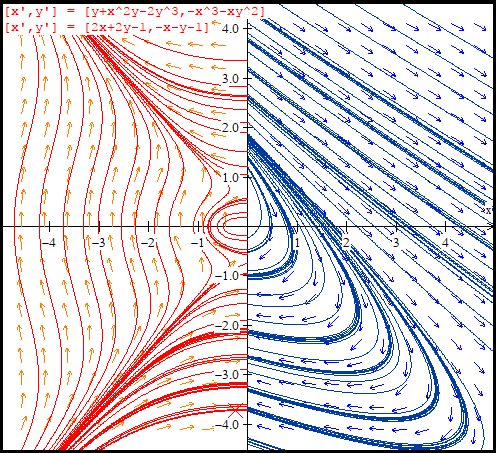}}
		\qquad
		\subfloat[Flow of system (\ref{Ch5s5.24})]{\includegraphics[width=.29\linewidth]{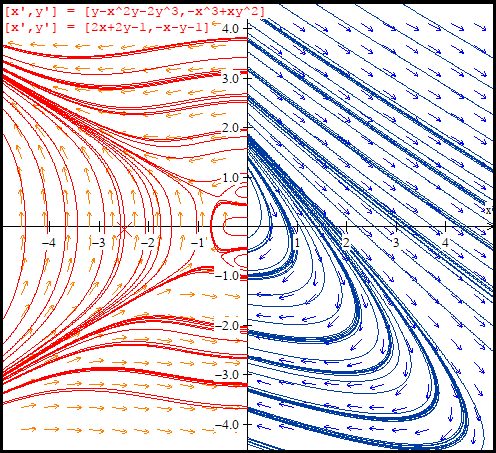}}
		\qquad
		\subfloat[Flow of system (\ref{Ch5s5.25})]{\includegraphics[width=.29\linewidth]{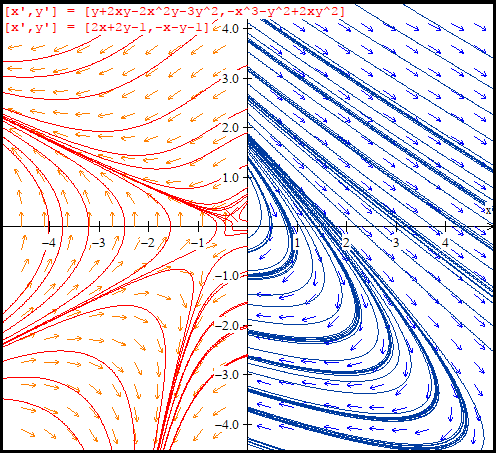}}	\\[-2ex]
		
		\vspace{1\baselineskip}
		\caption{\scriptsize{Nilpotent center-linear saddle system (\ref{Ch5eq114}) when $\mu\neq 0, -\frac{\delta^2}{\mu^2}<k<0$.}}\label{Ch5Fig5.3}
	\end{figure}
\end{framed}

\begin{example}
	Consider the system (\ref{Ch5eq114}) with $k=a$ or $b$ and $-\dfrac{\delta^2}{\mu^2}<k<0$. In this case, $\delta\neq 0$ and $\mu\neq 0$.
	\begin{enumerate}
		\item $F=F_3,a=-2$:
		\begin{align}\label{Ch5s5.23}
			\dot{X}=\begin{cases}
				(y+x^2y-2y^3,-x^3-xy^2),& \text{if}~x<0\\
				(2x+2y-1,-x-y-1),&\text{if}~x>0.
			\end{cases}
		\end{align}
		\item $F=F_4,a=-2$:
		\begin{align}\label{Ch5s5.24}
			\dot{X}=\begin{cases}
				(y-x^2y-2y^3,-x^3+xy^2),& \text{if}~x<0\\
				(2x+2y-1,-x-y-1),&\text{if}~x>0.
			\end{cases}
		\end{align}
		\item $F=F_5,a=-2, b=-3$:
		\begin{align}\label{Ch5s5.25}
			\dot{X}=\begin{cases}
				(y+2xy-2x^2y-3y^3,-x^3-y^2+2xy^2),& \text{if}~x<0\\
				(2x+2y-1,-x-y-1),&\text{if}~x>0.
			\end{cases}
		\end{align}
	\end{enumerate}
\end{example}
From Part (2) of Theorem (\ref{Ch5th1}), systems (\ref{Ch5s5.23})-(\ref{Ch5s5.25}) have no limit cycles.
\begin{example}
	Consider the system (\ref{Ch5eq114}) with $k=-\dfrac{\delta^2}{\mu^2}$.
	\begin{enumerate}
		\item $F=F_3,a=-4$:
		\begin{align}\label{Ch5s5.26}
			\dot{X}=\begin{cases}
				(y+x^2y-4y^3,-x^3-xy^2),&\text{if}~x<0\\
				(x+2y-1,x-y-3),&\text{if}~x>0.
			\end{cases}
		\end{align}
		\item $F=F_4,a=-4$:
		\begin{align}\label{Ch5s5.27}
			\dot{X}=\begin{cases}
				(y-x^2y-4y^3,-x^3+xy^2),&\text{if}~x<0\\
				(x+2y-1,x-y-3),&\text{if}~x>0.
			\end{cases}
		\end{align}
		\item $F=F_5,a=-4, b=-4$:
		\begin{align}\label{Ch5s5.28}
			\dot{X}=\begin{cases}
				(y+2xy-4x^2y-4y^3,-x^3-y^2+4xy^2),&\text{if}~x<0\\
				(x+2y-1,x-y-3),&\text{if}~x>0.
			\end{cases}
		\end{align}
	\end{enumerate}
\end{example}
From Part (3) of Theorem (\ref{Ch5th1}), systems (\ref{Ch5s5.26})-(\ref{Ch5s5.28}) can have at most one limit cycle.



\begin{example}
	Consider the system (\ref{Ch5eq114}) with $k<-\dfrac{\delta^2}{\mu^2}$.
	\begin{enumerate}
		\item $F=F_3,a=-5$:
		\begin{align}\label{Ch5s5.29}
			\dot(X)=\begin{cases}
				(y+x^2y-5y^3,-x^3-xy^2),&\text{if}~x<0\\
				(x+2y-1,x-y-3),&\text{if}~x>0.
			\end{cases}
		\end{align}
		\item $F=F_4,a=-5$:
		\begin{align}\label{Ch5s5.30}
			\dot(X)=\begin{cases}
				(y-x^2y-5y^3,-x^3+xy^2),&\text{if}~x<0\\
				(x+2y-1,x-y-3),&\text{if}~x>0.
			\end{cases}
		\end{align}
		\item $F=F_5,a=-5,b=-5$:
		\begin{align}\label{Ch5s5.31}
			\dot(X)=\begin{cases}
				(y+2xt-5x^2y-5y^3,-x^3-y^2+5xy^2),&\text{if}~x<0\\
				(x+2y-1,x-y-3),&\text{if}~x>0.
			\end{cases}
		\end{align}
	\end{enumerate}
\end{example}
From Part (4) of Theorem (\ref{Ch5eq114}), systems (\ref{Ch5s5.29})-(\ref{Ch5s5.31}) can have at most two limit cycles.

Now, consider the piecewise differential systems placed in three zones separated by two straight lines and formed by a nilpotent center and two Hamiltonian saddles,
\begin{align}\label{Ch5sys24}
	(\dot{x},\dot{y})=\begin{cases}
		\left({F}_{y}(x,y),-{F}_{x}(x,y)\right)& \text{if}~x<-1\\
		\left(H^{(1)}_y(x,y),-H^{(1)}_x(x,y)\right)&\text{if}~-1<x<1\\
		\left(H^{(2)}_y(x,y),-H^{(2)}_x(x,y)\right)&\text{if}~x>1,
	\end{cases} 
\end{align}
where $F_{x}=\dfrac{\partial F}{\partial x}, F_{y}=\dfrac{\partial F}{\partial y}$ with $F=F_i$ for $i=1,2,3,4,5$ and $H^{(j)}=-\frac{1}{2}\alpha_jx^2-\frac{1}{2}\delta_jy^2-\beta_jxy-\gamma_jx+\mu_jy$ for $j=1,2.$
\begin{theorem}
	Let $k=a$ or $b$.
	The system (\ref{Ch5sys24}) has at most two limit cycles. 
\end{theorem}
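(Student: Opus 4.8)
The plan is to translate the existence of a crossing limit cycle of (\ref{Ch5sys24}) into a polynomial system in the ordinates of the crossing points with the two switching lines $x=-1$ and $x=1$, and then to bound the number of admissible solutions. First I would split the limit cycles according to which of the two lines they meet; a limit cycle must meet at least one of them, since in a single zone the linear saddle has no periodic orbit and $F$ has no closed level curve (all critical points of each $F_i$ lie on $x=0$). A limit cycle meeting only $\{x=-1\}$ lies in $x<1$ and is a crossing periodic orbit of the two–zone system formed by the nilpotent–center field on $x<-1$ and the saddle $H^{(1)}$ on $-1<x<1$; an analysis parallel to that of Theorem~\ref{Ch5th5.3} (now along the line $x=-1$, with $F_i(x+1,y)$ in place of $F_i$) gives at most two of them. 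A limit cycle meeting only $\{x=1\}$ lies in $x>-1$ and is a crossing periodic orbit of a two–zone system built from the two Hamiltonian saddles $H^{(1)},H^{(2)}$; by \cite{llibre2021limit} — or directly, since the two saddle relations on $x=1$ are both of the form $y_1+y_2=\mathrm{const}$, which either has no solution or a whole segment of solutions — such a system has no limit cycle. So it remains to bound the cycles meeting both lines, and at the end to check (this bookkeeping is part of the work) that the three families together never exceed two.

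For a cycle meeting both lines, let it cross $x=-1$ at $(-1,y_1),(-1,y_2)$ and $x=1$ at $(1,y_3),(1,y_4)$ with $y_1\ne y_2$ and $y_3\ne y_4$. Constancy of the relevant first integral along the four arcs gives
\begin{align*}
& F(-1,y_1)=F(-1,y_2), \qquad H^{(1)}(-1,y_2)=H^{(1)}(1,y_3),\\
& H^{(2)}(1,y_3)=H^{(2)}(1,y_4), \qquad H^{(1)}(1,y_4)=H^{(1)}(-1,y_1).
\end{align*}
Each $H^{(j)}(\pm1,\cdot)$ is quadratic, so cancelling $y_3-y_4$ in the third equation gives $\delta_2(y_3+y_4)=2(\mu_2-\beta_2)$, i.e.\ $y_3+y_4$ is a constant $\sigma$; and each $F_i(-1,\cdot)$ is a polynomial of degree at most four, even in its argument, with leading coefficient $k/4$, so cancelling $y_1-y_2$ and then $y_1+y_2$ turns the first equation into $(y_1+y_2)\bigl(\tfrac k4(y_1^2+y_2^2)+c_i\bigr)=0$ with $c_i$ an explicit (possibly zero) constant. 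Writing $u=y_1+y_2$ and $w=y_1-y_2$, the difference of the two $H^{(1)}$ equations expresses $y_3-y_4$ as $w$ times an affine function of $u$ and, together with $y_3+y_4=\sigma$, determines $y_3$ and $y_4$; feeding this back into the sum of those two equations leaves a single relation $w^{2}g(u)=h(u)$ with $g,h$ polynomials of degree at most two.

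It then remains to intersect this relation with the factored $F$–relation. If $u=0$ then $w^{2}=h(0)/g(0)$ is forced, and since $w\mapsto-w$ merely interchanges the two crossing points on each line, this accounts for at most one cycle. If $u\ne0$ then $w^{2}=-8c_i/k-u^{2}$, and eliminating $w^{2}$ produces a single polynomial equation $P(u)=0$, which in general has degree four; each admissible root gives exactly one cycle and distinct roots give distinct cycles. The heart of the argument is to show that at most two roots of $P$ are admissible, and for this I would reinstate the geometric side conditions that the algebra discards: the arc in $x<-1$ must be a branch of the level curve $\{F=F(-1,y_1)\}$ which actually lies in $x<-1$ and joins $(-1,y_1)$ to $(-1,y_2)$ — this selects among the at most three algebraic partners $-y_1,\ \pm\sqrt{-4c_i/k-y_1^{2}}$ of $y_1$ — the two middle arcs must remain in $-1<x<1$, the right arc in $x>1$, and the orientations must be consistent around the loop. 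Using the sign conditions for the saddles from Theorem~\ref{Ch5th2} and the monotonicity of $F_i(-1,\cdot)$ on the relevant intervals, the admissible $u$ should be precisely the roots of a quadratic factor of $P$; applying the same considerations to the sliding segments of $x=\pm1$ rules out extra sliding cycles, and then the bookkeeping of the three families of the first paragraph gives at most two cycles in total.

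The step I expect to be the main obstacle is exactly this pruning. The naive elimination yields a quartic in $u$, and one must prove that its two ``extra'' roots always violate an admissibility condition — most delicately, that they correspond either to a branch of the quartic level curve of $F$ that does not reach the half–plane $x<-1$, or to a middle or right arc that escapes its own zone — and it is here that the Hamiltonian structure of all three pieces, together with the restrictions in Theorem~\ref{Ch5th2}, has to be used carefully; one also has to rule out coexistence scenarios in which a two–zone cycle meeting only $x=-1$ and a three–zone cycle would jointly exceed the bound. By contrast, the symmetric case $u=0$ and the two–zone reductions are routine once Theorem~\ref{Ch5th5.3} is available.
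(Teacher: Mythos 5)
Your overall framework --- closing conditions obtained by evaluating the first integrals at the crossing ordinates $y_1,y_2,y_3,y_4$ on $x=\pm1$ --- is the same as the paper's, and your attention to cycles that meet only one of the two switching lines (which the paper silently ignores) is a reasonable extra precaution. But as a proof the proposal has a genuine gap, and it is exactly where you locate it yourself: the claim that at most two roots of the quartic $P(u)=0$ are admissible is never established (you only assert that ``the admissible $u$ should be precisely the roots of a quadratic factor of $P$''), and the final bookkeeping that the one-line and two-line families jointly never exceed two is also left open. Since by your own account this pruning is the heart of the argument, the proposal does not prove the bound.

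What you are missing is the simplification the paper uses to avoid the quartic altogether: the non-symmetric branch of the $F$-closing equation is excluded by the sign restrictions built into the normal forms of Theorem \ref{Ch5th1}. Evaluating $F_i(-1,\cdot)$, the second factor in (\ref{Ch5eq2.9}) --- your $\tfrac k4(y_1^2+y_2^2)+c_i$ --- is strictly positive under $a\geq 0$ (resp.\ $a\geq 1$) and $b>0$, so $y_1=-y_2$ is forced and there is no branch with $u\neq 0$ to prune (the only delicate case is $F_5$ with $a<1$, which the paper itself passes over quickly). Once $y_1=-y_2$, the $H^{(2)}$ relation (\ref{Ch5eq2.14}) fixes $y_3+y_4$, the difference of the two $H^{(1)}$ relations gives $y_3-y_4$ as a constant multiple of $y_1$, and substituting back produces a single quadratic equation in $y_1$; it has at most two relevant roots, which is the paper's bound, and the degenerate subcases $\delta_1=0$ or $\delta_2=0$ are treated separately (period annulus or at most one cycle). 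Incidentally, your observation that in the symmetric case $w^2$ is determined uniquely (so, generically, at most one two-line crossing cycle, since $w\mapsto-w$ gives the same orbit) appears correct and is consistent with --- indeed sharper than --- the paper's count; the defect of the proposal is only that it routes the proof through the unproven quartic pruning and the unfinished combination of cases instead of using the parameter constraints to collapse the problem to the symmetric branch.
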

\begin{proof}
	Suppose that there is a periodic solution of the system (\ref{Ch5sys24}) that passes through the points $(-1, y_1),(-1,y_2), (1,y_3)$ and $(1,y_4)$ with $y_1<y_2$ and $y_4<y_3.$ Note that the solutions of Hamiltonian systems lie along the level curves of the Hamiltonian, we have
	\begin{eqnarray}
		F(-1, y_1)=&F(-1,y_2), \label{Ch5eq2.5}\\
		H^{(1)}(-1, y_2)=&H^{(1)}(1,y_3), \label{Ch5eq2.6}\\
		H^{(1)}(-1, y_1)=&H^{(1)}(1,y_4),\label{Ch5eq2.7}\\
		H^{(2)}(1, y_3)=&H^{(2)}(1,y_4).\label{Ch5eq2.8}
	\end{eqnarray}
	From the equation (\ref{Ch5eq2.5}) we get,
	\begin{equation} \label{Ch5eq2.9}
		\frac{1}{4}(y_1-y_2)(y_1+y_2)(2(a-1)-b(y_1^2-y_2^2))=0.
	\end{equation}
	Since $y_1\neq y_2$, equation (\ref{Ch5eq2.9}) gives,
	\begin{equation}\label{Ch5eq2.10}
		y_1=-y_2~~~~~~\text{or}~~~~~~b(y_1^2+y_2^2)=-2(a-1).
	\end{equation}
	But, since $b>0$ and $a\geq0$, $b(y_1^2+y_2^2)=-2(a-1)$ is not possible.\\
	From the equation (\ref{Ch5eq2.6}) and using $y_1=-y_2$ we get,
	\begin{equation}\label{Ch5eq2.11}
		\frac{\delta_1}{2} (y_1^2-y_3^2)+\beta_1(y_3-y_1)-\mu_1(y_3-y_1)+2\gamma_1=0.
	\end{equation}
	Similarly, the equation (\ref{Ch5eq2.7}) gives,
	\begin{align} \label{Ch5eq2.12}
		\frac{\delta_1}{2} (y_1^2-y_4^2)+\beta_1(y_1+y_4)+\mu_1(y_1-y_4)+2\gamma_1=0.
	\end{align}
	Further, from the equation (\ref{Ch5eq2.8}) and using $y_3\neq y_4$ we get,
	\begin{align}\label{Ch5eq2.14}
		\delta_2(y_3+y_4)=-2(\beta_2-\mu_2). 
	\end{align}
	Assume $\delta_1=0$. Then equations (\ref{Ch5eq2.11}) and (\ref{Ch5eq2.12}) become
	\begin{align}
		(\beta_1-\mu_1)(y_3-y_1)=&-2\gamma_1,~ \text{and}\label{Ch5eq214}\\ (\beta_1+\mu_1)y_1+(\beta_1-\mu_1)y_4=&-2\gamma_1,\label{Ch5eq215}
	\end{align} respectively.
	
	If $\beta_1-\mu_1\neq 0$, then from (\ref{Ch5eq214}) and (\ref{Ch5eq215}) we get, 
	$$y_3=-2\frac{\gamma_1}{\beta_1-\mu_1}+y_1,~\text{and}~~y_4=-2\frac{\gamma_1}{\beta_1-\mu_1}-\frac{\beta_1+\mu_1}{\beta_1-\mu_1}y_1,$$
	and $y_1=-y_2$ is a parameter.
	In this case, we have a period annulus around the origin of the system (\ref{Ch5sys24}).
	
	If $\beta_1=\mu_1\neq 0 $, then from (\ref{Ch5eq214}) we have $\gamma_1=0$ and $y_1=0,$  which is a contradiction, since $y_1\neq 0$. Hence, either $\beta_1\neq \mu_1$ or $\beta_1=\mu_1=0$ and we have a period annulus in this case.
	
	Next, assume that $\delta_1\neq 0$.
	From subtraction of equations (\ref{Ch5eq2.11}) and (\ref{Ch5eq2.12}) we get,
	\begin{equation}\label{Ch5eq2.16}
		\frac{1}{2}(y_3-y_4)(\delta_1(y_3+y_4)+2(\beta_1-\mu_1))=2(\beta_1+\mu_1)y_1.
	\end{equation}
	Multiplying equation (\ref{Ch5eq2.16}) by $\delta_2$ and using equation (\ref{Ch5eq2.14}), we get
	\begin{equation}\label{Ch5eq2.17}
		(y_3-y_4)(-\delta_1(\beta_2-\mu_2)+\delta_2(\beta_1-\mu_1))=2\delta_2(\beta_1+\mu_1)y_1.
	\end{equation}
	If $\delta_2=0$ then $\beta_2=\mu_2$. From the equation (\ref{Ch5eq2.11}) we can find $y_3$ in terms of $y_1$ and (\ref{Ch5eq2.12}) we can find $y_4$ in terms of $y_1$ and $y_1=-y_2$ is a parameter. Hence, in this case, we get that the system (\ref{Ch5sys24}) has a period annulus and at most one limit cycle formed by saddle separatrices.
	
	Now, assume that $\delta_1\delta_2\neq 0$.
	Let $\beta_i-\mu_i=l_i, i=1,2$, and $ \Delta=\delta_2l_1-\delta_1l_2$.
	
	Then $y_1, y_2,y_3$ and $y_4$ are related by the following equations,
	\begin{align}
		y_2=&-y_1,\\
		y_4=&-y_3-2\frac{l_2}{\delta_2},\\
		y_3=&\frac{\delta_2^2l_1y_1-\Delta l_2}{\delta_2 \Delta},\\
		\frac{\delta_1}{2}(y_1^2-y_3^2)+l_1(y_3-y_1)+2\gamma_1=&0.
	\end{align}
	Note that, $\Delta=0$ implies $l_1=0$. Hence, $l_2=0$ and form (\ref{Ch5eq2.14}), $\delta_2=0$, which is a contradicion.
	
	From the last two equations, we get a quadratic equation for $y_1$. It has at most two positive real roots and hence the system (\ref{Ch5sys24}) will have at most two limit cycles.
\end{proof}
\section{Limit cycles of piecewise smooth integrable systems} \label{Ch5s3}
In this section, we discuss the limit cycles of piecewise differential systems placed in two zones separated by one straight line and formed by an integrable degenerate center and Hamiltonian saddle and systems placed in three zones separated by two straight lines and formed by one integrable degenerate center and two Hamiltonian saddles.

The differential systems  
\begin{align}
	(\dot{x},\dot{y})&=
	\left( y(x^2-y^2)-2x^4y,x(x^2+y^2)-2x^3y^2\right), \label{Ch5eq3.1}\\
	(\dot{x},\dot{y})&=
	\left( -y(3x^2+y^2),x(x^2-y^2)\right) \label{Ch5eq3.2}
\end{align}
are integrable systems with degenerate centers at $(0,0)$. Integrals of the systems (\ref{Ch5eq3.1}) and (\ref{Ch5eq3.2}) are given by
\begin{align}
	I_1(x,y)=&\ln(x^2+y^2-1)-\frac{1}{2}\ln(x^4+y^4)-\tan^{-1}\left(\frac{x^2}{y^2} \right),\label{Ch5eq3.3}\\
	I_2(x,y)=&\frac{1}{2}\ln(x^2+y^2)-\frac{x^2}{x^2+y^2},\label{Ch5eq3.4}
\end{align}
respectively.

Consider the system
\begin{align}\label{Ch5eq3.5}
	(\dot{x},\dot{y})=\begin{cases}
		({I_1}_{y}(x,y),-{I_1}_{x}(x,y))& \text{if}~x<0\\
		(H_y(x,y),-H_x(x,y))&\text{if}~x>0.
	\end{cases} 
\end{align}
\begin{theorem} \label{Ch5th5.5}
	Let $x_0=-\dfrac{\beta \mu+\delta \gamma}{\alpha \delta-\beta^2}>0$.
	The system (\ref{Ch5eq3.5}) has at most one limit cycle which consists of saddle separatrices of the right subsystem if and only if $\mu=0.$ 
\end{theorem}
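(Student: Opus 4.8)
The plan is to run the same level-curve bookkeeping as in the proof of Theorem~\ref{Ch5th5.3}. If (\ref{Ch5eq3.5}) has a crossing limit cycle, it meets the switching line $\{x=0\}$ at two points $(0,y_1)$, $(0,y_2)$ with $y_2<0<y_1$; its arc in $x<0$ lies on a level set of the first integral $I_1$ of the left subsystem and its arc in $x>0$ on a level set of the Hamiltonian $H$ of the right saddle, so that $I_1(0,y_1)=I_1(0,y_2)$ and $H(0,y_1)=H(0,y_2)$. First I would restrict both functions to the $y$-axis: since $\tan^{-1}(x^2/y^2)$ and the $x$-monomials of $H$ vanish on $x=0$,
\[
I_1(0,y)=\ln|y^{2}-1|-\ln(y^{2})=\ln\Bigl|1-\tfrac{1}{y^{2}}\Bigr|,
\qquad
H(0,y)=-\tfrac12\delta y^{2}+\mu y .
\]
Because $I_1$ is even in $y$, the left subsystem is reversible with respect to the $x$-axis, so every periodic orbit around its center at the origin is symmetric about the $x$-axis and therefore meets $\{x=0\}$ in a symmetric pair $(0,c)$, $(0,-c)$; hence $y_2=-y_1$. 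Substituting this into $H(0,y_1)=H(0,y_2)$ yields $2\mu y_1=0$, and since $y_1\neq0$ we get $\mu=0$. This proves that $\mu=0$ is necessary for the existence of a crossing limit cycle, a fortiori for one consisting of saddle separatrices.

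For the other direction assume $\mu=0$. Then in the notation of Theorem~\ref{Ch5th2} one has $u=\delta\gamma$, $v=\beta\gamma$ and $\alpha=1$ (if $\alpha=0$ then $\gamma=0$ and $x_0=0$, contradicting $x_0>0$), so $x_0=-\delta\gamma/\Delta$, $y_0=\beta\gamma/\Delta$, and (\ref{Ch5eqn1.7}) gives
\[
A+B=2y_0+\frac{2\beta}{\delta}\,x_0=\frac{2\beta\gamma}{\Delta}-\frac{2\beta\gamma}{\Delta}=0,
\qquad A=\frac{\gamma}{\sqrt{-\Delta}}\neq0 .
\]
Thus the two separatrices of the right saddle cross the $y$-axis at the symmetric points $(0,A)$ and $(0,-A)$, and the candidate limit cycle $\Gamma$ is the closed curve obtained by joining the two separatrix rays from $(0,A)$ and $(0,-A)$ to the saddle point $(x_0,y_0)$ (all in $x\ge0$, since $x_0>0$) to the arc in $x\le0$ of the periodic orbit of the left center through $(0,\pm A)$. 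This center orbit is available as long as $(0,A)$ lies in the period annulus of the origin of the left subsystem, which is the one remaining feasibility condition; granting it, $\Gamma$ is an invariant closed curve of (\ref{Ch5eq3.5}) whose portion in $x>0$ consists of saddle separatrices, and one checks that along these arcs the two subsystems cross $\{x=0\}$ transversally in the same direction (so $\Gamma$ is a crossing closed curve, not a sliding one).

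It remains to show $\Gamma$ is a limit cycle and that it is the only one of its kind. Uniqueness is immediate: by the first paragraph, any crossing limit cycle whose $x>0$-arc consists of saddle separatrices must have that arc on the critical level $\{H=H(x_0,y_0)\}$ — the only level set of $H$ containing a separatrix — which meets $\{x=0\}$ precisely at $(0,A)$ and $(0,-A)$; hence $y_1=|A|$ and the cycle equals $\Gamma$. To see that $\Gamma$ is isolated I would analyse the pencil of level curves of the quadratic $H$: these are hyperbolas asymptotic to the two separatrix lines, and as the level crosses $H(x_0,y_0)$ the hyperbola passes from one pair of the opposite sectors cut out by the separatrices to the other; only for levels on the side whose sectors face $x>0$, and only until the hyperbola's vertex reaches $\{x=0\}$, does a branch cross $\{x=0\}$ twice while remaining in $x>0$ between the crossings. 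Matching these arcs with the symmetric center arcs on the left produces a one-parameter family of crossing periodic orbits accumulating onto $\Gamma$ as the level tends to $H(x_0,y_0)$, so $\Gamma$ bounds this crossing period annulus and is a limit cycle; together with the necessity direction this gives the equivalence.

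I expect the main obstacle to be this last geometric step: proving rigorously, from the explicit form of $H$, that exactly the critical level of the right saddle closes up with a center arc into an isolated periodic set — i.e. that $\Gamma$ is a boundary, not an interior orbit, of the crossing period annulus — together with the bookkeeping of exactly when $(0,A)$ lies in the left center's period annulus and when the matching at $\{x=0\}$ is crossing rather than sliding. By contrast the algebraic core (deriving $\mu=0$ and $A=-B$) is routine.
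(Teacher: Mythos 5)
Your proposal is correct and follows essentially the same route as the paper: both arguments evaluate the two first integrals on the switching line $x=0$, deduce $y_2=-y_1$ (the paper via the algebra $(y_1^2-1)y_2^2=(y_2^2-1)y_1^2$, you via the evenness of $I_1$ in $y$), and then obtain $\mu=0$ from $H(0,y_1)=H(0,y_2)$. The second half of your write-up — computing $A=-B$ from (\ref{Ch5eqn1.7}) and analysing the separatrix level of $H$ as the boundary of the crossing period annulus — is actually more detailed than the paper's own proof, which stops after the algebra and simply asserts the symmetric period annulus for $\mu=0$, so the geometric step you flag as the main obstacle is left implicit in the paper rather than being a defect of your approach.
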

\begin{proof}
	Suppose that there is a periodic solution of the system (\ref{Ch5eq3.5})  passing through $(0,y_1)$ and $(0,y_2)$ with $y_1<y_2$. Then we have 
	$H(0,y_1)=H(0,y_2)$ and $I_1(0,y_1)=I_1(0,y_2).$
	This implies, 
	\begin{align}\label{Ch5eq3.6}
		\ln(y_1^2-1)-\ln(y_1^2)&=\ln(y_2^2-1)-\ln(y_2^2),~\text{and}\\
		\delta(y_1+y_2)&=2\mu.
	\end{align}
	Assume that $\delta\neq 0$ and $k=\dfrac{2\mu}{\delta}$. Then, 
	\begin{align}\label{Ch5eq3.8}
		(y_1^2-1)(y_2^2)&=(y_2^2-1)(y_1^2),~ \text{and}~~
		y_1=k-y_2.
	\end{align}
	Therefore, we get $y_1=- y_2=k-y_2$. This gives $k=\mu=0$.
	Hence, the system (\ref{Ch5eq3.5}) has a period annulus consisting of periodic orbits passing through $(0,y)$ for all $y>1$ if and only if $\mu=0$. 
\end{proof}
\begin{example}
	Consider the system (\ref{Ch5eq3.5}) with $\beta=1,\delta=2,\mu=0,\alpha=-1$ and $\gamma=3$;
	\begin{align}\label{Ch5s5.58}
		\dot{X}=\begin{cases}
			(y(x^2-y^2)-2x^4y,x(x^2+y^2)-2x^3y^2),&\text{if}~x<0\\
			(-x-2y,-x+y+3),&\text{if}~x>0.
		\end{cases}
	\end{align}
	From Theorem (\ref{Ch5th5.5}), system (\ref{Ch5s5.58}) has at most one limit cycle consisting of saddle separatrices.
	
\end{example}
\begin{framed}
\begin{figure}[H]
	\subfloat[Flow of system (\ref{Ch5eq3.1})]{\includegraphics[width=.29\linewidth]{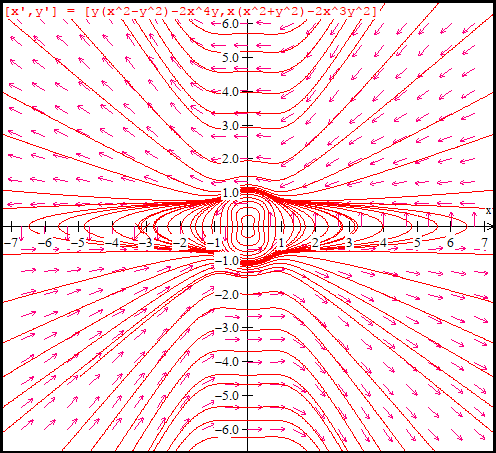}}
	\qquad
	\subfloat[Linear saddle]{\includegraphics[width=.29\linewidth]{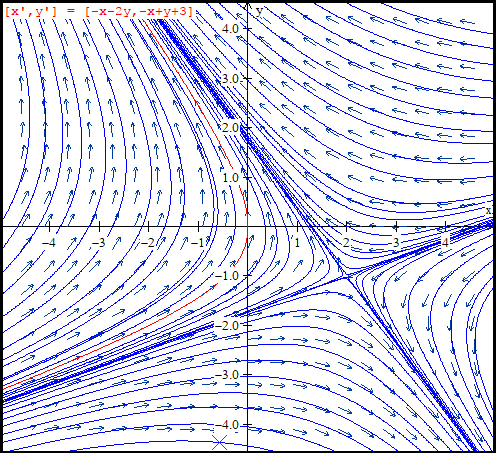}}
	\qquad
	\subfloat[Flow of system (\ref{Ch5s5.58})]{\includegraphics[width=.29\linewidth]{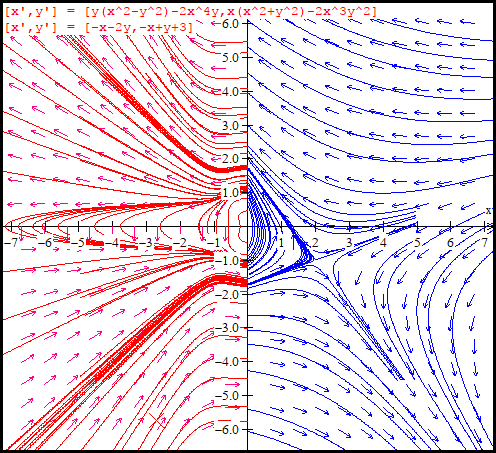}}		\\[-2ex]
	
	\vspace{1\baselineskip}
	\caption{Integrable center-linear saddle system (\ref{Ch5s5.58}).} \label{Ch5Fig5.6}
\end{figure}
\end{framed}
Now consider the piecewise differential systems formed by a degenerate center and Hamiltonian saddle
\begin{align}\label{Ch5eq3.12}
	(\dot{x},\dot{y})=\begin{cases}
		({I_2}_{y}(x,y),-{I_2}_{x}(x,y))& \text{if}~x<0\\
		(H_y(x,y),-H_x(x,y))&\text{if}~x>0.
	\end{cases} 
\end{align}
\begin{theorem} \label{Ch5th5.6}
	Let $x_0=-\dfrac{\beta \mu+\delta \gamma}{\alpha \delta-\beta^2}>0$.
	The system (\ref{Ch5eq3.12}) has at most one limit cycle which consists of saddle separatrices of the right subsystem if and only if $\mu=0.$ 
\end{theorem}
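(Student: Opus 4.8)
The plan is to imitate the proof of Theorem \ref{Ch5th5.5} almost verbatim, since the geometry is the same: the right subsystem is a linear saddle, which has no periodic orbit, while the left subsystem is the degenerate center (\ref{Ch5eq3.2}), whose orbits are nested closed curves around the origin, each meeting the $y$-axis. So I would begin by assuming the system (\ref{Ch5eq3.12}) has a periodic solution; it must cross the line $x=0$. On $x=0$ one has $I_2(0,y)=\frac{1}{2}\ln(y^2)=\ln|y|$ and $H(0,y)=-\frac{1}{2}\delta y^2+\mu y$, and since a level curve of $I_2$ and a level curve of the quadratic $H$ each meet $x=0$ in at most two points, the orbit crosses $x=0$ at precisely two points $(0,y_1)$, $(0,y_2)$ with $y_1\neq y_2$, the portion of the orbit in $x\le 0$ lying on a level curve of $I_2$ and the portion in $x\ge 0$ on a level curve of $H$. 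Hence $I_2(0,y_1)=I_2(0,y_2)$ and $H(0,y_1)=H(0,y_2)$.

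Next I would carry out the elementary algebra. The first identity reads $\ln|y_1|=\ln|y_2|$, which with $y_1\neq y_2$ forces $y_2=-y_1$ (in particular $y_1 y_2<0$). Substituting $y_2=-y_1$ into $-\frac{1}{2}\delta y_1^2+\mu y_1=-\frac{1}{2}\delta y_2^2+\mu y_2$ gives $2\mu y_1=0$, and since $y_1\neq 0$ this yields $\mu=0$. (When $\delta=0$ the identity $H(0,y_1)=H(0,y_2)$ is already $\mu(y_1-y_2)=0$, so $\mu=0$; and $\delta=\mu=0$ would give $x_0=0$, contrary to hypothesis, so we may take $\delta\neq 0$.) Thus if $\mu\neq 0$ the system (\ref{Ch5eq3.12}) has no periodic orbit, hence no limit cycle.

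For the converse I would set $\mu=0$ and identify the unique candidate. Then every pair $(0,y_1)$, $(0,-y_1)$ satisfies both constraints, and because $I_2(x,y)$ is even in $y$, while $H(0,y)=-\frac{1}{2}\delta y^2$ is also even in $y$, the relevant arcs of the two subsystems glue: the arc of (\ref{Ch5eq3.2}) in $x\le 0$ from $(0,y_1)$ to $(0,-y_1)$ closes up with an arc of the right saddle in $x\ge 0$ between the same two points, the hypothesis $x_0>0$ guaranteeing that the saddle point and the relevant separatrix branches sit in $x\ge 0$. Evaluating the formulas of Theorem \ref{Ch5th2} at $\mu=0$ gives $A+B=\frac{2\mu}{\delta}=0$, so the two separatrices meet the $y$-axis at $(0,A)$ and $(0,-A)$; the crossing periodic orbits just produced fill a period annulus whose outermost member is the polycycle consisting of the two saddle separatrices running from $(0,A)$ to the saddle point and back to $(0,-A)$, glued to the center arc between those two points, while for $|y_1|>|A|$ the corresponding level curve of $H$ has no bounded arc in $x\ge 0$ joining the two crossing points. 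As in Theorem \ref{Ch5th5.5}, this separatrix polycycle is then the only candidate limit cycle, and it occurs exactly when $\mu=0$, which is the assertion.

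The algebra of the second step is routine; the part requiring real care — and the main obstacle — is the converse, namely verifying that for $\mu=0$ the saddle-separatrix polycycle genuinely closes against a left-subsystem arc in the correct half-planes (this is precisely where $x_0>0$ and the simultaneous evenness in $y$ of $I_2(0,\cdot)$ and $H(0,\cdot)$ enter), together with the claim that the remaining matched crossing orbits form a period annulus rather than a source of additional limit cycles.
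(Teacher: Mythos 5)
Your proposal follows essentially the same route as the paper's proof: match the level curves $I_2(0,y_1)=I_2(0,y_2)$ and $H(0,y_1)=H(0,y_2)$ on the switching line to force $y_2=-y_1$ and hence $\mu=0$, then for $\mu=0$ obtain the period annulus of crossing orbits bounded by the saddle-separatrix loop (the paper phrases this via $|y|<\min\{A,B\}$ with $A,B$ from (\ref{Ch5eqn1.7})). Your treatment is correct and in fact somewhat more careful than the paper's on the converse direction (the identity $A+B=2\mu/\delta$, the role of $x_0>0$, and the degenerate case $\delta=0$), but it is not a genuinely different argument.
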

\begin{proof}
	Let $A$ and $B$ be the constants given by the equation (\ref{Ch5eqn1.7}). Assume that there is a periodic solution of the system (\ref{Ch5eq3.12})  passing through the points $(0,y_1)$ and $(0,y_2)$ with $y_1<y_2$. Then,  
	$H(0,y_1)=H(0,y_2)$ and $I_2(0,y_1)=I_2(0,y_2).$
	
	Hence, 
	\begin{align*}
		\ln(y_1^2)&=\ln(y_2^2)\Rightarrow y_1=-y_2,~\text{and}\\
		\delta(y_1+y_2)&=2\mu \Rightarrow \mu=0.
	\end{align*}
	Therefore, for each $|y|<\min\{A, B\}$, there is a periodic orbit passing through the points $(0,\pm y)$. Thus, there is a period annulus inside a limit cycle formed by saddle separatrices of the system (\ref{Ch5eq3.12})  if and only if $\mu=0$.
\end{proof} 
\begin{example}
	Consider a center-saddle system (\ref{Ch5eq3.12}) with $\beta=1,\delta=2,\mu=0,\alpha=-1$ and $\gamma=3$;
	\begin{align}\label{Ch5s5.60}
		\dot{X}=\begin{cases}
			(-y(3x^2+y^2),x(x^2-y^2)),&\text{if}~x<0\\
			(-x-2y,-x+y+3),&\text{if}~x>0.
		\end{cases}
	\end{align} 
	In this case, the center is integrable and degenerate and the saddle on the right side is linear Hamiltonian with saddle point $(x_0,y_0)$ such that $x_0>0$.
	
	From Theorem (\ref{Ch5th5.6}), system (\ref{Ch5s5.60}) can have at most one limit cycle consisting of separatrices of the saddle.

\begin{framed}
	\begin{figure}[H]
		\subfloat[Flow of system (\ref{Ch5eq3.2})]{\includegraphics[width=.29\linewidth]{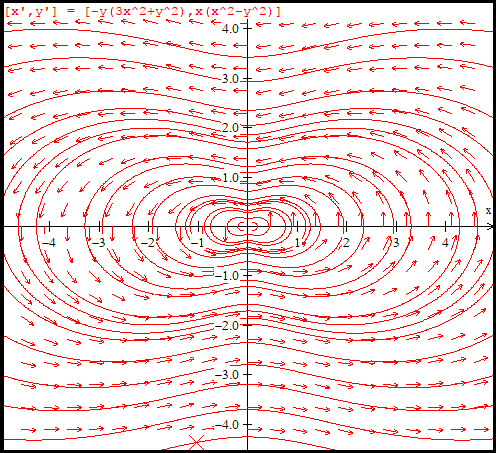}}
		\qquad \qquad
		\subfloat[Center-saddle (\ref{Ch5s5.60})]{\includegraphics[width=.29\linewidth]{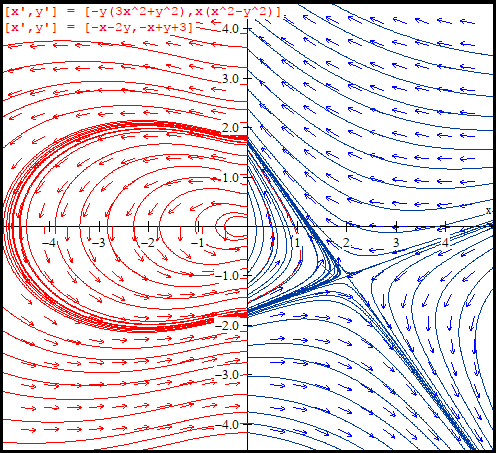}}
		\\[-2ex]
		
		\vspace{1\baselineskip}
		\caption{\scriptsize{Integrable degenerate center-linear saddle}} \label{Ch5Fig5.7}
	\end{figure}
\end{framed}

\end{example}
Next, consider the piecewise differential system placed in three zones formed by one degenerate center (\ref{Ch5eq3.1}) and two Hamiltonian saddles,
\begin{align}\label{Ch5sys3.13}
	(\dot{x},\dot{y})=\begin{cases}
		\left({I_1}_{y}(x,y),-{I_1}_{x}(x,y)\right)& \text{if}~x<-1\\
		\left(H^{(1)}_y(x,y),-H^{(1)}_x(x,y)\right)&\text{if}~-1<x<1,\\
		\left(H^{(2)}_y(x,y),-H^{(2)}_x(x,y)\right)&\text{if}~x>1
	\end{cases} 
\end{align} 
\begin{theorem}\label{Ch5th3.3} Consider the systems (\ref{Ch5sys3.13}).
	\begin{enumerate}
		\item If $\delta_1 \delta_2\neq 0$, then
		the system (\ref{Ch5sys3.13}) has at most one periodic solution. 
		\item If $\delta_1=0,~ \delta_2(\mu_1-\beta_1)\neq 0$, then the system (\ref{Ch5sys3.13}) has at most one limit cycle.
		\item If $\delta_1\neq 0$ and $\delta_2=0$, then the system (\ref{Ch5sys3.13}) has at most one limit cycle.
		\item If $\delta_2=\delta_1=\mu_2-\beta_2=0$, then the system (\ref{Ch5sys3.13}) has a period annulus.
	\end{enumerate}
\end{theorem}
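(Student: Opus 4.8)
\section*{Proof proposal}

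The plan is to imitate the arguments already used for systems (\ref{Ch5eq114}), (\ref{Ch5sys24}) and (\ref{Ch5eq3.5})--(\ref{Ch5eq3.12}). Suppose a crossing periodic orbit of (\ref{Ch5sys3.13}) meets the two separation lines at $(-1,y_1),(-1,y_2),(1,y_3),(1,y_4)$ with $y_1<y_2$, $y_4<y_3$ and all of them nonzero. Since the left subsystem is integrable with first integral $I_1$ and the central and right subsystems are Hamiltonian with $H^{(1)},H^{(2)}$, consecutive intersection points lie on common level curves, so that $I_1(-1,y_1)=I_1(-1,y_2)$, $H^{(1)}(-1,y_i)=H^{(1)}(1,y_j)$ for the two arcs in the central zone, and $H^{(2)}(1,y_3)=H^{(2)}(1,y_4)$.

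First I would evaluate $I_1$ on the line $x=-1$: by (\ref{Ch5eq3.3}), $g(y):=I_1(-1,y)=\ln(y^2)-\tfrac12\ln(1+y^4)-\tan^{-1}(1/y^2)$ is an even function of $y$ with $g'(y)=\dfrac{2(1+y^2)}{y(1+y^4)}>0$ for $y>0$; hence $g$ is a strictly increasing bijection of $(0,\infty)$ onto its image, so $g(y_1)=g(y_2)$ with $y_1\ne y_2$ forces $y_2=-y_1$. This plays the same role here as the relation $y_1=-y_2$ extracted from $F$ in the three-zone nilpotent system (\ref{Ch5sys24}).

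Next I would substitute $y_2=-y_1$ into the three linear relations. Writing $p_1=\beta_1+\mu_1$, $l_j=\beta_j-\mu_j$ ($j=1,2$) and $\Delta=\delta_2l_1-\delta_1l_2$, and noting that $H^{(j)}(-1,y)$ and $H^{(j)}(1,y)$ differ only in the signs of the $\beta_j$- and $\gamma_j$-terms, the two $H^{(1)}$-equations take the form $\tfrac12\delta_1(y_3^2-y_1^2)-p_1y_1+l_1y_3+2\gamma_1=0$ and $\tfrac12\delta_1(y_4^2-y_1^2)+p_1y_1+l_1y_4+2\gamma_1=0$ (the assignment of signs being immaterial for what follows), while the $H^{(2)}$-equation, after dividing by $y_3-y_4\ne0$, becomes $\delta_2(y_3+y_4)=-2l_2$. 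In Case (1), $\delta_1\delta_2\ne0$: the last relation fixes $y_3+y_4$; subtracting the two $H^{(1)}$-equations expresses $y_3-y_4$ as a constant multiple of $y_1$, hence $y_3$ and $y_4$ are affine in $y_1$; and---the decisive point---adding the two $H^{(1)}$-equations cancels the terms linear in $y_1$, leaving an equation $Py_1^2+Q=0$ whose coefficients depend only on the parameters. Thus $y_1^2$ is determined and, together with $y_2=-y_1$, there is at most one such periodic orbit, i.e. at most one periodic solution. Cases (2)--(4) are the degeneracies of this computation: when $\delta_1=0$ the two $H^{(1)}$-relations become affine and, if moreover $l_1\ne0$ and $\delta_2\ne0$ (Case (2)), one solves for $y_3,y_4$ affinely in $y_1$, the $H^{(2)}$-relation producing a single scalar obstruction on the parameters; when that obstruction fails there is no three-zone crossing orbit, and when it holds the family of such orbits fills a period annulus whose boundary is a polycycle of saddle separatrices, i.e. at most one limit cycle. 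The case $\delta_1\ne0,\ \delta_2=0$ (Case (3)) is analogous with the roles of the two saddles interchanged, and when additionally $\mu_2=\beta_2$, i.e. $l_2=0$, every residual obstruction vanishes and the whole family closes up into a period annulus, which is Case (4).

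I expect the obstacles to be twofold. The first is the monotonicity of $g(y)=I_1(-1,y)$ on $(0,\infty)$: the derivative computation is short, but one must treat the branch of $\tan^{-1}$ carefully near $y=0$ and keep in mind that $I_1$ is only defined where $x^2+y^2>1$, which on $x=-1$ holds for every $y\ne0$. The second, more substantial, is the bookkeeping in the degenerate cases: one must verify that, when the parameter obstruction is met, the affine family $(y_1,y_3,y_4)$ genuinely consists of periodic orbits---that the relevant saddle arcs stay in the strips $-1<x<1$ and $x>1$ and actually close up---and that the single limit cycle which can survive is precisely the separatrix polycycle, as in Theorems \ref{Ch5th5.3}, \ref{Ch5th5.5} and \ref{Ch5th5.6}. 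Finally, the admissibility conditions ($-Q/P>0$, the ordering $y_4<y_3$, and the arcs lying on the correct side of $x=\pm1$) are routine to track but are exactly what turn ``at most one real value of $y_1$'' into ``at most one limit cycle''.
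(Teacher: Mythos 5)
Your proposal follows essentially the same route as the paper's proof: impose the level-set closing conditions at the four crossing points $(-1,y_1),(-1,y_2),(1,y_3),(1,y_4)$, deduce $y_2=-y_1$ from $I_1(-1,\cdot)$, eliminate among the two $H^{(1)}$-relations and the $H^{(2)}$-relation, and run the same case analysis on $\delta_1,\delta_2$. The differences are only in execution: you justify $y_1^2=y_2^2$ by the monotonicity of $g(y)=I_1(-1,y)$ on $(0,\infty)$ (which the paper merely asserts from its equation (\ref{Ch5eq3.18})), you eliminate to an equation $Py_1^2+Q=0$ in $y_1$ instead of the paper's quadratic $Py_4^2+Qy_4+R=0$ in $y_4$, and your reading of the $\delta_1=0$ case as a parameter obstruction yielding either no crossing orbit or a period annulus bounded by a separatrix polycycle is, if anything, a more accurate account of that case than the paper's wording, while sharing with the paper the same unexamined degenerate subcases (e.g.\ $\beta_1+\mu_1=0$ or $\delta_2 l_1-\delta_1 l_2=0$) and admissibility checks.
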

\begin{proof}
	Suppose that there is a periodic solution of the system (\ref{Ch5sys3.13}) which passes through the points $(-1, y_1),(-1,y_2), (1,y_3)$ and $ (1,y_4)$ with $y_1<y_2$ and $y_4<y_3.$ Since solutions of the integrable systems lie along level curves of the first integrals, we have
	\begin{eqnarray}
		I_1(-1, y_1)=I_1(-1,y_2), \label{Ch5eq3.14}\\
		H^{(1)}(-1, y_2)=H^{(1)}(1,y_3), \label{Ch5eq3.15}\\
		H^{(1)}(-1, y_1)=H^{(1)}(1,y_4),\label{Ch5eq3.16}\\
		H^{(2)}(1, y_3)=H^{(2)}(1,y_4).\label{Ch5eq3.17}
	\end{eqnarray}
	From equation (\ref{Ch5eq3.14}), we get
	\begin{equation} \label{Ch5eq3.18}
		2\ln  \left( {\frac {y_{{1}}}{y_{{2}}}} \right) -\frac{1}{2}\ln  \left( {
			\frac {{y_{{1}}}^{4}+1}{{y_{{2}}}^{4}+1}} \right) -\arctan \left( {y_{
				{1}}}^{-2} \right) +\arctan \left( {y_{{2}}}^{-2} \right) =0.
	\end{equation}
	Real roots of the equation (\ref{Ch5eq3.18}) satisfy
	$y_1^2=y_2^2.$ But $y_1\neq y_2$. Hence, $y_1=-y_2.$
	
	From the equation (\ref{Ch5eq3.15}) and (\ref{Ch5eq3.16}) we have
	\begin{equation}\label{Ch5eq3.19}
		\frac{\delta_1}{2} (y_2^2-y_3^2)+\beta_1(y_2+y_3)+\mu_1(y_2-y_3)+2\gamma_1=0.
	\end{equation}
	Similarly, from the equation (\ref{Ch5eq3.16}) we have 
	\begin{align}\label{Ch5eq3.20}
		\frac{\delta_1}{2} (y_1^2-y_4^2)+\beta_1(y_1+y_4)+\mu_1(y_1-y_4)+2\gamma_1=0.
	\end{align}
	Also, from the equation (\ref{Ch5eq3.17}) we have
	\begin{equation}\label{Ch5eq3.21}
		y_3=y_4-2\nu_2.
	\end{equation}
	Subtracting equation (\ref{Ch5eq3.20}) from the equation (\ref{Ch5eq3.19}) and in the view of (\ref{Ch5eq3.21}) we get,
	\begin{align}\label{Ch5eq3.22}
		y_1=-y_2=-\frac{(y_4-\nu_2)(\nu_1-\nu_2)}{l_1},
	\end{align}
	where $l_i=\dfrac{\mu_i+\beta_i}{\delta_i}, \nu_i=\dfrac{\mu_i-\beta_i}{\delta_i},k_i=\dfrac{\gamma_i}{\delta_i} ~~\text{for}~~i=1,2.$\\
	Now, eliminating $y_1$ from equations (\ref{Ch5eq3.19}), (\ref{Ch5eq3.20}) and (\ref{Ch5eq3.21}) we get,
	\begin{align}\label{Ch5eq3.23}
		Py_4^2+Qy_4+R=0,
	\end{align}
	where 
	$P=\nu_2^2-l_1^2, Q=-2(\nu_1+\nu_2)P$ and $R=\nu_2^4+\nu_1^2\nu_2^2+P\nu_1\nu_2+(4k_1-2\nu_2^2)l_1^2.$
	
	The equation (\ref{Ch5eq3.23}) has at most one positive or negative root which gives the value of $y_4$ and the values of $y_1,y_2, y_3$, satisfying equations (\ref{Ch5eq3.14}) to (\ref{Ch5eq3.17}), can be determined from the equations (\ref{Ch5eq3.21}) and (\ref{Ch5eq3.22}). Hence, if $\delta_1\delta_2(\mu_1+\beta_1)\neq 0$ and if $\delta_1\neq 0$, then the system (\ref{Ch5sys3.13}) has atmost one periodic solution. \\
	Now consider the case that $\delta_1=0$. From the equations (\ref{Ch5eq3.19}) and (\ref{Ch5eq3.20}) we get,
	\begin{align}\label{Ch5eq3.24}
		-&(\beta_1+\mu_1)y_1+(\beta_1-\mu_1)y_3+2\gamma_1=0,~\text{and}\\ \label{Ch5eq3.25}
		&(\beta_1+\mu_1)y_1+(\beta_1-\mu_1)y_4+2\gamma_1=0.
	\end{align}
	Further, if $\beta_1-\mu_1\neq 0$  then from equations (\ref{Ch5eq3.24}) and (\ref{Ch5eq3.25}) we get,
	\begin{align*}
		y_3=&\frac{-2\gamma_1}{\beta_1-\mu_1}+\dfrac{\beta_1+\mu_1}{\beta_1-\mu_1}y_1,~\text{and}\\
		y_4=&\frac{-2\gamma_1}{\beta_1-\mu_1}-\dfrac{\beta_1+\mu_1}{\beta_1-\mu_1}y_1.
	\end{align*}
	Substituting $y_3$ and $y_4$ in (\ref{Ch5eq3.18}), we get only one value for $y_1$ when $\delta_2\neq 0$ and hence, only one periodic solution.
	
	If $\delta_1=0,~\delta_2=0$ and $\mu_2-\beta_2=0$, then $y_1$ and $y_2$ determined in terms of parameters $y_3$ and $y_4$. Hence, in this case, we get a period annulus. Similarly, if $\delta_2=0$ and $\delta_1\neq 0$ then the system has at most one periodic solution. 
\end{proof}
Now, consider the piecewise differential system in three zones formed by a degenerate center (\ref{Ch5eq3.2}) and two Hamiltonian saddles,
\begin{align}\label{Ch5sys3.26}
	(\dot{x},\dot{y})=\begin{cases}
		\left({I_2}_{y}(x,y),-{I_2}_{x}(x,y)\right)& \text{if}~x<-1,\\
		\left(H^{(1)}_y(x,y),-H^{(1)}_x(x,y)\right)&\text{if}~-1<x<1,\\
		\left(H^{(2)}_y(x,y),-H^{(2)}_x(x,y)\right)&\text{if}~x>1.
	\end{cases} 
\end{align} 
\begin{theorem} Consdier the system (\ref{Ch5sys3.26}).
	\begin{enumerate}
		\item If $\delta_1 \delta_2\neq 0$ then
		the system (\ref{Ch5sys3.26}) has at most one periodic solution. 
		\item If $\delta_1=0,~ \delta_2(\mu_1-\beta_1)\neq 0$ then the system (\ref{Ch5sys3.26}) has one limit cycle.
		\item If $\delta_1\neq 0$ and $\delta_2=0$ then the system (\ref{Ch5sys3.26}) has at most one limit cycle.
		\item If $\delta_2=\delta_1=\mu_2-\beta_2=0$ then the system (\ref{Ch5sys3.26}) has a period annulus.
	\end{enumerate}
\end{theorem}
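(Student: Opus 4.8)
The plan is to mimic the proof of Theorem~\ref{Ch5th3.3} almost verbatim, the only change being that the first integral of the leftmost zone is now $I_2$ instead of $I_1$. Assume system~(\ref{Ch5sys3.26}) admits a periodic solution meeting the switching lines $x=-1$ and $x=1$ at the points $(-1,y_1)$, $(-1,y_2)$, $(1,y_3)$, $(1,y_4)$ with $y_1<y_2$ and $y_4<y_3$. Since every piece is integrable, the orbit lies on level sets of the corresponding first integrals, so
\begin{align*}
	I_2(-1,y_1)&=I_2(-1,y_2),\\
	H^{(1)}(-1,y_2)&=H^{(1)}(1,y_3),\\
	H^{(1)}(-1,y_1)&=H^{(1)}(1,y_4),\\
	H^{(2)}(1,y_3)&=H^{(2)}(1,y_4).
\end{align*}

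First I would extract the consequence of the leftmost equation. Since $I_2(-1,y)=\tfrac12\ln(1+y^2)-\tfrac{1}{1+y^2}=g(y^2)$ with $g(t)=\tfrac12\ln(1+t)-\tfrac{1}{1+t}$ and $g'(t)=\tfrac{1}{2(1+t)}+\tfrac{1}{(1+t)^2}>0$ for $t\ge 0$, the map $g$ is strictly increasing; hence $I_2(-1,y_1)=I_2(-1,y_2)$ forces $y_1^2=y_2^2$, and since $y_1\neq y_2$ we obtain $y_1=-y_2$. This is the exact analogue of the step in Theorem~\ref{Ch5th3.3} where equation~(\ref{Ch5eq3.18}) yields $y_1=-y_2$.

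From this point on the remaining three conditions involve only the two Hamiltonian saddles $H^{(1)},H^{(2)}$, and they coincide with the relations (\ref{Ch5eq3.19})--(\ref{Ch5eq3.21}) used in the proof of Theorem~\ref{Ch5th3.3}. So I would proceed identically: substitute $y_2=-y_1$; use the $H^{(2)}$-relation to write $y_3=y_4-2\nu_2$; subtract the two $H^{(1)}$-relations to express $y_1$ linearly in $y_4$ as in (\ref{Ch5eq3.22}); and finally eliminate $y_1$ to land on the quadratic $Py_4^2+Qy_4+R=0$ with $P=\nu_2^2-l_1^2$, $Q=-2(\nu_1+\nu_2)P$, $R=\nu_2^4+\nu_1^2\nu_2^2+P\nu_1\nu_2+(4k_1-2\nu_2^2)l_1^2$ when $\delta_1\delta_2\neq0$. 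A quadratic has at most one positive (and at most one negative) root, which yields statement~(1). The degenerate cases are then disposed of exactly as before: if $\delta_1=0$ and $\delta_2(\mu_1-\beta_1)\neq0$ the two $H^{(1)}$-relations become linear, determine $y_3,y_4$ in terms of $y_1$, and substituting back into the leftmost equation leaves a single admissible $y_1$, so the crossing is unique, giving~(2); if $\delta_1\neq0$, $\delta_2=0$ the same elimination again produces at most one crossing, giving~(3); and if $\delta_1=\delta_2=\mu_2-\beta_2=0$ all four heights are tied together by purely linear relations that leave one free parameter, producing a period annulus, which is~(4).

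The only genuinely new ingredient compared with Theorem~\ref{Ch5th3.3} is the monotonicity of $g$, which is immediate, so I do not expect a real obstacle. The one point to handle carefully is the sign bookkeeping — verifying that the admissible root of the quadratic actually corresponds to a crossing orbit satisfying $y_1<y_2$ and $y_4<y_3$, and that it closes into a genuine limit cycle rather than a tangency — but this is precisely the bookkeeping already carried out for system~(\ref{Ch5sys3.13}).
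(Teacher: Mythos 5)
Your proposal follows the paper's proof essentially verbatim: the level-set condition $I_2(-1,y_1)=I_2(-1,y_2)$ is reduced to $y_1=-y_2$ (your monotonicity argument for $g(t)=\tfrac12\ln(1+t)-\tfrac1{1+t}$ makes explicit what the paper's equation (\ref{Ch5eq3.31}) merely asserts), after which the remaining analysis is, exactly as in the paper, the argument of Theorem \ref{Ch5th3.3} repeated for the saddle zones. This matches the paper's proof, which likewise derives $y_1=-y_2$ and then states that the rest is identical to Theorem \ref{Ch5th3.3}.
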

\begin{proof}
	Suppose that there is a periodic solution of the system (\ref{Ch5sys3.26}) which passes through the points $(-1, y_1),(-1,y_2), (1,y_3)$ and $(1,y_4)$ with $y_1<y_2$ and $y_4<y_3.$ Since, solutions of the integrable systems lie along level curves of the first integrals, we have
	\begin{eqnarray}
		I_2(-1, y_1)=I_2(-1,y_2), \label{Ch5eq1.27}\\
		H^{(1)}(-1, y_2)=H^{(1)}(1,y_3), \label{Ch5eq1.28}\\
		H^{(1)}(-1, y_1)=H^{(1)}(1,y_4),\label{Ch5eq1.29}\\
		H^{(2)}(1, y_3)=H^{(2)}(1,y_4),\label{Ch5eq1.30}
	\end{eqnarray}
	From the equation (\ref{Ch5eq1.27}) we get,
	\begin{equation} \label{Ch5eq3.31}
		\ln\left(\frac{y_1^2+1}{y_2^2+1}\right)-\frac{2}{y_1^2+1}+\frac{2}{y_2^2+1}=0.
	\end{equation}
	The real roots of the equation satisfy
	$y_1^2=y_2^2.$ But $y_1\neq y_2$ hence we get $y_1=-y_2.$
	The rest of the proof is similar to the proof of Theorem \ref{Ch5th3.3}.
\end{proof}
\section{Limit cycles of piecewise system separated by rays}\label{Ch5s4}
In this section, we discuss the piecewise differential system separated by rays and formed by the linear integrable system with saddle and center. We convert systems into polar form and study the number of limit cycles for a piecewise smooth differential system of type center-saddle separated by rays $\theta=k$ and center-saddle-saddle type separated by two rays $\theta_i=k_i~~\text{for}~~i=1,2$.

A linear integrable  system with saddle  point at $(\alpha, \beta), \alpha>0$ and oriented in anticlockwise direction is given by
\begin{align}\label{Ch5sys4.1}
	(\dot{x},\dot{y})=(a (x-\alpha)-(y-\beta), -(x-\alpha)+a (y-\beta),-1<a<0.
\end{align} 
Its integral is given by,
\begin{align}
	a\ln\left(\frac{y-x-\beta+\alpha}{y+x-\alpha-\beta}\right)+\ln\left((y-\beta)^2-(x-\alpha)^2\right)=\ln c^2.
\end{align}
Therefore, the solution passing through $(\rho, 0)$ is 
\begin{align}
	a\ln\left(\frac{y-x+\alpha-\beta}{y+x-\alpha-\beta}\right)+&\ln\left((y-\beta)^2-(x-\alpha)^2\right)\nonumber\\
	&=a\ln\left(\frac{-\rho+\alpha-\beta}{\rho-\alpha-\beta}\right)+\ln\left(\beta^2-(\rho-\alpha)^2\right).
\end{align} 
This solution intersects $y$-axis (i.e., $\theta=\frac{\pm \pi}{2}$) at  two points $(0,y_1)$ and $(0,y_2)$ which will satisfy 
\begin{align}\label{Ch5eq4.4}
	a\ln\left(\frac{y_i+\alpha-\beta}{y_i-\alpha-\beta}\right)+\ln\left((y_i-\beta)^2-\alpha^2\right)=a\ln\left(\frac{-\rho+\alpha-\beta}{\rho-\alpha-\beta}\right)+\ln\left(\beta^2-(\rho-\alpha)^2\right)
\end{align}
for $i=1,2$. Here, note that $y_1\neq -y_2$. 

Simlarly, for any fix $0<\theta=\phi<\frac{\pi}{2}$, the solution given by (\ref{Ch5eq4.4}), intersects two rays $\theta=\phi$ and $\theta=-\phi$ which are not symmetric about the $x$-axis.

The saddle separatrices, invariant eigenspaces of the system (\ref{Ch5sys4.1}), are 
\begin{align}
	\frac{y-\beta}{x-\alpha}=\pm 1.
\end{align}
The saddle separatrices intersect rays $\theta=\pm \phi$ at points that are symmetric to the $x$-axis.

Polar form of the general planar  system $\dot{x}=F(x,y),~~ \dot{y}=G(x,y)$ is given by
$$\frac{dr}{d \theta}=r\frac{F(r\cos \theta, r\sin \theta) \cos \theta +G(r\cos \theta, r\sin \theta)\sin \theta}{-F(r\cos \theta, r\sin \theta)\sin \theta+G(r\cos \theta, r\sin \theta)\cos \theta}.$$
Hence, polar form of the integrable systems (\ref{Ch5eq3.1})  and (\ref{Ch5eq3.2}) are given by, respectively,
\begin{align}\label{Ch5eq4.6}
	\frac{dr}{d\theta}&=-2r(r^2-1)\frac{\cos^3\theta \sin\theta}{\cos^4\theta+\sin^4\theta},~\text{and}\\
	\label{Ch5eq4.7}
	\frac{dr}{d\theta}&=4r \frac{\cos^3\theta \sin\theta}{\cos^4\theta-4\cos^2\theta \sin^2\theta-\sin^4\theta}.
\end{align}
Integrals of the equations (\ref{Ch5eq4.6}) and  (\ref{Ch5eq4.7})  are
\begin{align}
	\frac{r^2-1}{r^2}&=\frac{1}{2\cos^4(\theta)-2\cos^2(\theta)+1}e^{-2\tan^{-1}(2\cos^2(\theta)-1)},~\text{and}\\
	r^4&=\frac{1}{4\cos^4(\theta)-2\cos^2(\theta)-1}e^{\frac{\sqrt{5}}{10}\tanh^{-1}\left(\frac{\sqrt{5}}{10}(8\cos^2(\theta)-2)\right)},
\end{align}
respectively.

Let $n\in \mathbb{N}$.
The generalized trigonometric functions $x(\theta)=Cs(\theta), y(\theta)=Sn(\theta)$ satisfy the properties mentioned in  Proposition (\ref{Ch5pr4.1}) \cite{alvarez2006generating}. See \cite{gasull1998center,liapunov2016stability} for proof.
\begin{proposition}\label{Ch5pr4.1}
	\begin{enumerate}
		\item $Cs^{2n}(\theta)+nSn^2(\theta)=1$.
		\item $Cs(\theta)$ and $Sn(\theta)$ are $T$-period functions with $\displaystyle T=2\sqrt{\frac{\pi}{n}} \frac{\Gamma(\frac{1}{2n})}{\Gamma(\frac{n+1}{2n})}$, where $\Gamma$ denotes the Gamma function.
		\item $\displaystyle \int_{0}^{T}Sn^p(\theta) Cs^q(\theta) d\theta=0$ if $p$ or $q$ is odd.
		\item $\displaystyle \int_{0}^{T}Sn^p(\theta) Cs^q(\theta) d\theta=\frac{2}{\sqrt{n^{p+1}}}\frac{\Gamma(\frac{p+1}{2})\Gamma(\frac{q+1}{2n})}{\Gamma(\frac{p+1}{2}+\frac{q+1}{2n})}$ if both $p$ and $q$ are even.
	\end{enumerate}
\end{proposition}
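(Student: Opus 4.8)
The plan is to realize $Cs(\theta)$ and $Sn(\theta)$ as the two components of the solution of the Hamiltonian initial value problem
\begin{align*}
	\dot x=-y,\qquad \dot y=x^{2n-1},\qquad (x(0),y(0))=(1,0),
\end{align*}
whose Hamiltonian is $\mathcal H(x,y)=\tfrac12 y^2+\tfrac1{2n}x^{2n}$; this is the standard normalization of the generalized trigonometric functions used in \cite{alvarez2006generating,gasull1998center}. Then item (1) is immediate: $\mathcal H$ is a first integral, so $\tfrac12 Sn^2(\theta)+\tfrac1{2n}Cs^{2n}(\theta)=\mathcal H(1,0)=\tfrac1{2n}$, and multiplying by $2n$ gives $Cs^{2n}(\theta)+n\,Sn^2(\theta)=1$. (Alternatively, one differentiates this identity and checks it is forced by $Cs'=-Sn$, $Sn'=Cs^{2n-1}$.)

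Before the remaining items I would record, as a short lemma, the symmetries of the orbit, all obtained from uniqueness applied to transformed solutions: the map $(x,y,\theta)\mapsto(x,-y,-\theta)$ preserves both the equation and the initial data, so $Cs$ is even and $Sn$ is odd; the map $(x,y)\mapsto(-x,-y)$ (a symmetry because $x^{2n-1}$ is odd) sends the orbit through $(1,0)$ to the orbit through $(-1,0)$, which, since $(-1,0)$ is the antipodal point on the closed level curve, is the same orbit shifted by a half period, whence $Cs(\theta+T/2)=-Cs(\theta)$ and $Sn(\theta+T/2)=-Sn(\theta)$, where $T$ is the minimal period. (Periodicity itself holds because the level set $\{Cs^{2n}+nSn^2=1\}$ is a compact curve encircling the origin, the only equilibrium, which is a center.)

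For item (2), the above symmetries give $T=4\tau$, where $\tau$ is the time along the orbit from $(1,0)$ to $(0,1/\sqrt n)$; on this quarter-orbit $y>0$, so $d\theta=-dx/y$ with $y=\sqrt{(1-x^{2n})/n}$, and with the substitution $u=x^{2n}$ one obtains
\begin{align*}
	\tau=\sqrt n\int_0^1\frac{dx}{\sqrt{1-x^{2n}}}=\frac1{2\sqrt n}\int_0^1 u^{\frac1{2n}-1}(1-u)^{-\frac12}\,du=\frac1{2\sqrt n}B\!\left(\tfrac1{2n},\tfrac12\right);
\end{align*}
writing $B$ via Gamma functions and using $\Gamma(\tfrac12)=\sqrt\pi$ yields $T=2\sqrt{\pi/n}\,\Gamma(\tfrac1{2n})/\Gamma(\tfrac{n+1}{2n})$. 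For item (3): if $p$ is odd the parity relations make $Sn^pCs^q$ odd about $\theta=0$, and if $p$ is even and $q$ odd then $p+q$ is odd and the half-period relation $Sn^p(\theta+T/2)Cs^q(\theta+T/2)=(-1)^{p+q}Sn^p(\theta)Cs^q(\theta)$ forces cancellation over a full period; either way the integral vanishes. For item (4), with $p,q$ both even the integrand is even and $T/2$-periodic, so $\int_0^T Sn^pCs^q\,d\theta=4\int_0^{T/4}Sn^pCs^q\,d\theta$; on $[0,T/4]$ the substitution $x=Cs(\theta)$, $d\theta=-dx/Sn$, $Sn=\sqrt{(1-x^{2n})/n}$ turns this into $4n^{-(p-1)/2}\int_0^1(1-x^{2n})^{(p-1)/2}x^q\,dx$, and $u=x^{2n}$ reduces it to $\tfrac2n\, n^{-(p-1)/2}B\!\left(\tfrac{q+1}{2n},\tfrac{p+1}{2}\right)$; since $n^{-(p-1)/2-1}=n^{-(p+1)/2}=1/\sqrt{n^{p+1}}$, this is exactly the asserted formula.

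The only genuinely delicate step is the symmetry bookkeeping in the lemma above — in particular verifying that the half-period anti-periodicity holds with anti-period exactly $T/2$ (one must rule out a spurious smaller period and confirm that the candidate transformations preserve the initial conditions, not just the differential equation). Everything after that is a change of variables into Euler's Beta integral, and I expect no difficulty there.
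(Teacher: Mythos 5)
Your proof is correct, but note that the paper does not actually prove this proposition: it is quoted from the literature, with the proofs delegated to \cite{gasull1998center,liapunov2016stability}, so there is no internal argument to compare against. What you supply is essentially the standard derivation from those sources, and it holds up: you (necessarily) fix the definition of $Cs,Sn$ as the solution of $\dot x=-y$, $\dot y=x^{2n-1}$ with $(x(0),y(0))=(1,0)$ — which is the right normalization, consistent with the paper's later use of the system $(\dot x,\dot y)=(-y,x^{2n-1})$ and with \cite{alvarez2006generating} — so that item (1) is conservation of $\tfrac12 y^2+\tfrac1{2n}x^{2n}$, items (2) and (4) are changes of variables into Euler's Beta integral, and item (3) follows from the parity and half-period relations. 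The symmetry bookkeeping you flag as delicate closes cleanly: the antipodal map gives $(-Cs(\theta),-Sn(\theta))=(Cs(\theta+s),Sn(\theta+s))$ for some shift $s\in(0,T)$, and applying it twice forces $2s\in T\mathbb{Z}$, hence $s=T/2$; likewise $\int_0^{T/2}=2\int_0^{T/4}$ uses $Sn(T/2-\theta)=Sn(\theta)$, $Cs(T/2-\theta)=-Cs(\theta)$ together with $q$ even, which you invoke implicitly. The quick sanity check $n=1$ (recovering $T=2\pi$ and the classical Wallis-type integrals) confirms the constants. In short: the proposal is a sound, self-contained proof of a statement the paper only cites, adding value rather than diverging from an existing argument.
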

Consider the system $(\dot{x},\dot{y})=(-y, x^{2n-1})$ with nilpotent center at the origin. If we substitute $x=RCs(\theta), y=RSn(\theta)$, then in these coordinates the system becomes
\begin{align*}
	\dot{R}&=\frac{x^{2n-1}(-y)+y(x^{2n-1})}{R^{2n-1}},~~\dot{\theta}=\frac{x(x^{2n-1})-ny(-y)}{R^{n+1}}
\end{align*}
which is equivalent to $ (\dot{R},\dot{\theta})=(0,1).$

Let $-1<a<0$ and let $0<\phi\leq \frac{\pi}{2}$.
Consider the piecewise smooth planar differential system separated by rays $\theta=\phi$ and $\theta=-\phi$ passing through the origin,
\begin{align}\label{Ch5sys4.10}
	(\dot{x},\dot{y})&=(a (x-\alpha)-(y-\beta),-(x-\alpha)+a (y-\beta)), &\text{if}~-\phi<\theta<\phi\\
	\label{Ch5sys4.11}
	(\dot{R},\dot{\theta})&=(0,0), ~&\text{if}~\phi<\theta<2\pi-\phi,
\end{align}
where $(r,\theta)$ are polar coordinates and $(R,\theta)$ are the generalized polar coordinates.
\begin{theorem}\label{Ch5th5.9}
	The piecewise smooth differential system formed by (\ref{Ch5sys4.10}) and (\ref{Ch5sys4.11}) has exactly one limit cycle which consists of saddle separatrices of (\ref{Ch5sys4.10}) and solution of (\ref{Ch5sys4.11}) passing through the point of intersection of saddle separatrix and ray $\theta=\phi$.
\end{theorem}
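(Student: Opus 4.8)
The plan is to run the level-curve matching argument of the earlier theorems, now across the ray partition. Suppose the piecewise system formed by (\ref{Ch5sys4.10}) and (\ref{Ch5sys4.11}) has a periodic trajectory $\Gamma$. In the sector $\phi<\theta<2\pi-\phi$ the generalized-polar system (\ref{Ch5sys4.11}) keeps $R$ constant and makes $\theta$ vary monotonically, so its orbits are the ``generalized circles'' $R=\mathrm{const}$, each meeting the two bounding rays once; in the sector $-\phi<\theta<\phi$ the vector field (\ref{Ch5sys4.10}) is a linear saddle and has no closed orbit. Hence $\Gamma$ crosses the ray $\theta=\phi$ in a single point $p$ and the ray $\theta=-\phi$ in a single point $q$. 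Write $p=(\rho\cos\phi,\rho\sin\phi)$ with $\rho>0$.

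First I would use the matching across the left sector. Since $p$ and $q$ lie on one orbit of (\ref{Ch5sys4.11}), they lie on one level curve of the first integral $\tfrac{x^{2n}}{2n}+\tfrac{y^2}{2}$ of the nilpotent center $(\dot x,\dot y)=(-y,x^{2n-1})$. This function is even in $y$, and along each of the rays $\theta=\pm\phi$ it is a strictly increasing function of the distance to the origin (using $\cos(-\phi)=\cos\phi$, $\sin(-\phi)=-\sin\phi$, and Proposition (\ref{Ch5pr4.1})(1) if one prefers the coordinate $R$); equating the two values forces $q=(\rho\cos\phi,-\rho\sin\phi)$, so $q$ is the reflection of $p$ in the $x$-axis and has the same $\rho$.

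Next I would use the matching across the right sector. Writing the first integral of the saddle (\ref{Ch5sys4.1}) in factored form $\mathcal H=(a+1)\ln|(y-\beta)-(x-\alpha)|+(1-a)\ln|(y-\beta)+(x-\alpha)|$ (valid away from the separatrices, on which $\mathcal H\to-\infty$) and substituting $p$ together with its mirror $q$, the two linear factors evaluated at $q$ are, up to sign, the two linear factors at $p$ interchanged --- this is where the saddle being symmetric about the $x$-axis enters. Consequently $\mathcal H(p)-\mathcal H(q)$ collapses to $2a\ln|W|$ with $W=\dfrac{(y_p-\beta)-(x_p-\alpha)}{(y_p-\beta)+(x_p-\alpha)}$, and since $a\neq0$ the requirement $\mathcal H(p)=\mathcal H(q)$ becomes $|W|=1$, i.e. $(x_p-\alpha)(y_p-\beta)=0$; the factor $y_p-\beta$ cannot vanish (that would force $\rho=0$ since $\sin\phi>0$), so a genuine crossing periodic orbit would have $p$ on the vertical line through the saddle. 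But then $p$ and $q$ lie on opposite branches of that level curve, hence on different orbits of (\ref{Ch5sys4.10}) --- consistent with the recorded fact that non-separatrix solutions meet $\theta=\phi$ and $\theta=-\phi$ at points not symmetric about the $x$-axis --- so no genuine crossing periodic orbit exists. The two matching conditions are, however, satisfied in the limiting (separatrix) case: the branch of one separatrix meeting $\theta=\phi$ at $p^{\ast}$ and the branch of the other meeting $\theta=-\phi$ at $q^{\ast}$ are mirror images in the $x$-axis, so the arc $R=R^{\ast}$ of (\ref{Ch5sys4.11}) through $p^{\ast}$ joins $q^{\ast}$ to $p^{\ast}$; concatenating it with the two separatrix segments (which meet continuously at the saddle point) gives a closed curve invariant under the piecewise flow. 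By the preceding discussion this is the only candidate, it is isolated since no nearby trajectory closes up, and chasing the flow around it (out along one separatrix to $p^{\ast}$, across the center arc to $q^{\ast}$, in along the other separatrix to the saddle) exhibits it as a limit cycle of the stated form; this yields both existence and uniqueness.

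The step I expect to be the main obstacle is this last identification: one has to do the sign bookkeeping that makes the collapse $\mathcal H(p)-\mathcal H(q)=2a\ln|W|$ and the passage $\mathcal H\to-\infty$ legitimate, treat the boundary case $\phi=\pi/2$ (where $\cos\phi=0$) separately, and then argue --- rather than quote --- that the separatrix loop is a genuine isolated limit cycle of the piecewise system for every $a\in(-1,0)$ and every $\phi\in(0,\pi/2]$, checking in particular that the relevant separatrix branches and the constant-$R$ arc always fit together into one closed curve.
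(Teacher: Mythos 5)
Your proposal is sound and establishes the statement by the level-curve matching method used earlier in the paper (as in Theorems \ref{Ch5th5.3}--\ref{Ch5th5.6}), whereas the paper's own proof of this theorem is a two-sentence symmetry appeal: the saddle separatrices of (\ref{Ch5sys4.10}) meet the rays $\theta=\pm\phi$ at points symmetric about the $x$-axis, the orbits of (\ref{Ch5sys4.11}) are symmetric closed curves of a global center, and (from the discussion surrounding (\ref{Ch5eq4.4})) the non-separatrix saddle orbits meet the two rays at points that are \emph{not} symmetric --- with this last, decisive asymmetry only asserted. What you do differently is prove that assertion: the center matching forces the two crossing points to be mirror images, the factored logarithmic integral of (\ref{Ch5sys4.1}) collapses under the reflection to $\pm2a\ln|W|$, so $a\neq0$ yields $(x_p-\alpha)(y_p-\beta)=0$, and the invariant sectors bounded by the separatrices show that two such mirror points cannot lie on a single saddle orbit; this makes explicit exactly where ``integrable but not Hamiltonian'' ($a\neq0$) enters, which the paper leaves implicit. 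Two remarks. First, a small slip inherited from the paper's displayed integral: the first integral of (\ref{Ch5sys4.1}) is $(1-a)\ln|(y-\beta)-(x-\alpha)|+(1+a)\ln|(y-\beta)+(x-\alpha)|$; your weights are interchanged and, as written, have derivative $4a$ along orbits --- but the collapse to $\mp2a\ln|W|$ and the conclusion are unchanged. Second, both your factor-interchange step and the closing-up of the separatrix loop require the saddle configuration to be symmetric about the $x$-axis, in effect $\beta=0$: for $\beta\neq0$ the separatrices meet $\theta=\pm\phi$ at non-symmetric distances, so the advertised separatrix cycle does not close; this restriction is equally tacit in the paper's proof, so it is a shared caveat rather than a gap you introduced. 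It would also be worth one line justifying that a crossing periodic orbit meets each ray exactly once (the transition map across the wedge and the mirror map across the center sector are monotone, so the return map on $\theta=-\phi$ has only fixed points); your remaining flagged worries ($\phi=\pi/2$, sign bookkeeping, isolation of the separatrix loop) are harmless, since your matching argument excludes every other periodic orbit outright.
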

\begin{proof}
	The only saddle separatrix of the system (\ref{Ch5sys4.10}) is symmetric about the $x$-axis. Also, all solutions of the system (\ref{Ch5sys4.11}) are symmetric and the system (\ref{Ch5sys4.11}) has a global center. Hence, the proof.
\end{proof}
Let $0<\phi\leq \dfrac{\pi}{2}$ and $-1<a<0$.
Consider a piecewise smooth planar differential system separated by rays $\theta=-\phi$ and $\theta=\phi$ passing through the origin,
\begin{align}\label{Ch5sys4.14}
	(\dot{x},\dot{y})&=
	\begin{cases}
		(a (x-\alpha)-(y-\beta), ~~-(x-\alpha)+a (y-\beta)), ~~&\text{if}~~-\phi<\theta<\phi\\
		(y(x^2-y^2)-2x^4y, x(x^2+y^2)-2x^3y^2), ~~~&\text{if}~~\phi<\theta<2\pi-\phi.
	\end{cases}
\end{align}
\begin{theorem}
	The piecewise-smooth system (\ref{Ch5sys4.14}) has one limit cycle if $|\alpha\pm\beta|\leq 1$, otherwise no limit cycle.
\end{theorem}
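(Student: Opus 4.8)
The plan is to search for crossing periodic orbits and reduce the count to a one-dimensional problem. The key structural fact is the reflection symmetry of the center subsystem (\ref{Ch5eq3.1}): its polar equation (\ref{Ch5eq4.6}) has right-hand side odd in $\theta$ (only even powers of $\cos\theta$ and $\sin\theta$ occur), so any solution satisfies $r(\theta)=r(-\theta)$. Hence an orbit of (\ref{Ch5eq3.1}) meeting the ray $\theta=\phi$ at distance $r$ from the origin also meets $\theta=-\phi$ at distance $r$, and its arc lying in $\phi<\theta<2\pi-\phi$ joins $(r\cos\phi,r\sin\phi)$ to the mirror point $(r\cos\phi,-r\sin\phi)$; in the radial parameter the center half-return map between the two rays is the identity. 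Therefore a crossing periodic orbit through a level $r$ exists precisely when the orbit of the linear saddle (\ref{Ch5sys4.1}) issuing from $(r\cos\phi,-r\sin\phi)$ arrives at $(r\cos\phi,r\sin\phi)$ without leaving the sector $|\theta|<\phi$, so that counting limit cycles amounts to counting the admissible values of $r$ realizing this symmetric connection.

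I would then study the saddle half-return map by means of the first integral $a\ln\bigl|(y-x-\beta+\alpha)/(y+x-\alpha-\beta)\bigr|+\ln\bigl|(y-\beta)^{2}-(x-\alpha)^{2}\bigr|$ of (\ref{Ch5sys4.1}). The condition that $(r\cos\phi,\pm r\sin\phi)$ lie on a single saddle orbit is one scalar equation, and the only saddle orbits that can enter the sector through one ray and leave it through the other are those trapped between the invariant lines $y-\beta=\pm(x-\alpha)$; thus the admissible $r$ fill a bounded interval whose endpoints are controlled by these separatrices, which is why the limit cycle, when it exists, is made of saddle separatrix arcs together with a center arc, exactly as in Theorem \ref{Ch5th5.9}. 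The two separatrices cut the $x$-axis at $x=\alpha-\beta$ and $x=\alpha+\beta$, while the unit circle $x^{2}+y^{2}=1$ is an invariant curve of the center -- one checks $\tfrac{d}{dt}(x^{2}+y^{2})=4x^{3}y\,(1-x^{2}-y^{2})$ vanishes on it -- and I would show that the symmetric-connection equation has a (unique) root exactly when the separatrix intercepts fall in $[-1,1]$, that is, when $|\alpha-\beta|\le1$ and $|\alpha+\beta|\le1$, the constant $1$ being forced by this invariant circle; monotonicity of the half-return difference on the admissible interval then gives uniqueness, and if either inequality fails there is no admissible root and hence no limit cycle.

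The hard part will be the middle step: a sharp description of the phase portrait of the saddle (\ref{Ch5sys4.1}) inside the sector relative to its two invariant lines -- which orbits join $\theta=\phi$ to $\theta=-\phi$, monotonicity of the resulting correspondence, and how these change as the saddle point $(\alpha,\beta)$ varies -- together with pinning down the exact threshold and the boundary case $|\alpha\pm\beta|=1$, where a separatrix becomes tangent to (or passes through a point of) the invariant unit circle and one must check that the closed orbit survives. One also has to exclude sliding periodic orbits: on each ray the crossing set and the sliding set are fixed by the signs of the normal components of the two fields, and one verifies that no closed orbit can incorporate a sliding segment, so the crossing-orbit count is exhaustive.
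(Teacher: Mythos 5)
Your skeleton is the one the paper itself relies on: the reflection symmetry of the center arcs coming from the odd polar equation (\ref{Ch5eq4.6}) (so the center half-return map is the identity in the radial variable), the observation after (\ref{Ch5eq4.4}) that regular saddle orbits meet the two rays asymmetrically, and a candidate cycle built from separatrix arcs plus a center arc, exactly as in Theorem \ref{Ch5th5.9}. But the step that distinguishes the present statement from Theorem \ref{Ch5th5.9} --- that a cycle exists precisely when $|\alpha\pm\beta|\le 1$ --- is the step you defer (``I would show\ldots''), and the mechanism you propose for it does not hold up. The separation boundaries are the rays $\theta=\pm\phi$, not the $x$-axis, so the relevant quantities are the radii at which the lines $y-\beta=\pm(x-\alpha)$ meet those rays, namely $(\alpha+\beta)/(\cos\phi+\sin\phi)$ and $(\alpha-\beta)/(\cos\phi+\sin\phi)$; these coincide with your $x$-axis intercepts only when $\phi=\pi/2$, and they are equal to one another (which the symmetric closure of the graphic requires) only when $\beta=0$, a constraint your argument never confronts. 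Moreover, the invariant circle $x^{2}+y^{2}=1$ does not force the constant $1$: writing the first integral of (\ref{Ch5eq4.6}) as $(r^{2}-1)/r^{2}=C\,g(\theta)$ with $g(\theta)=2e^{-2\arctan(\cos 2\theta)}/(1+\cos^{2}2\theta)$, one checks that $g$ is decreasing in $\cos 2\theta$, so on the zone $\pi/2\le\theta\le 3\pi/2$ its maximum is attained on the rays; hence for $\phi=\pi/2$ every orbit of (\ref{Ch5eq3.1}) entering the left half-plane at $(0,\rho)$ stays bounded and exits at $(0,-\rho)$ for every $\rho>0$, not only for $\rho\le 1$, while for $\phi<\pi/2$ the escape threshold is a $\phi$-dependent radius strictly larger than $1$. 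So ``crossing radius at most $1$'' is not the obstruction, and your claimed equivalence with $|\alpha\pm\beta|\le1$ is unsupported.

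There is also an internal tension in your reduction: if, as you allow, the symmetric-connection equation could have isolated roots realized by regular saddle orbits, the resulting cycle would not consist of separatrix arcs at all; to conclude that the separatrix graphic is the only candidate you must actually prove the asymmetry assertion $y_{1}\neq -y_{2}$ attached to (\ref{Ch5eq4.4}) for all saddle orbits crossing both rays, not merely quote it. Together with the unproved monotonicity, the untreated boundary case $|\alpha\pm\beta|=1$, and the sliding-orbit exclusion that you only mention, the decisive parts of the criterion remain open: what you have is a correct reduction to a one-dimensional connection problem (essentially what the paper's one-line reference to Theorem \ref{Ch5th5.9} also provides), not a proof that the threshold is $|\alpha\pm\beta|\le 1$.
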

\begin{proof}
	The proof is similar to the proof of Theorem \ref{Ch5th5.9}.
\end{proof}

\begin{framed}
	\begin{figure}[H]
		\subfloat[System \ref{Ch5sys4.14} when $\phi=\frac{\pi}{2}$]{\includegraphics[width=.31\linewidth]{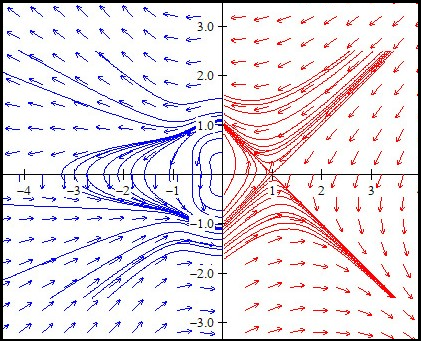}}
		\qquad \qquad
		\subfloat[System \ref{Ch5sys4.15} when $\phi=\frac{\pi}{2}$]{\includegraphics[width=.3\linewidth]{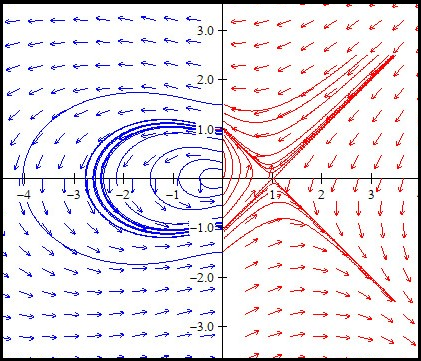}}
		\\[-2ex]
		\vspace{1\baselineskip}
		\caption{\scriptsize{Integrable piecewise system separated by rays}}
	\end{figure}
\end{framed}

Let $0<\phi\leq \dfrac{\pi}{2}$ and $-1<a<0$.
Consider the piecewise smooth planar differential system separated by rays $\theta=-\phi$ and $\theta=\phi$ passing through the origin,
\begin{align}\label{Ch5sys4.15}
	(\dot{x},\dot{y})&=
	\begin{cases}
		(a (x-\alpha)-(y-\beta), ~~-(x-\alpha)+a (y-\beta)), ~~&\text{if}~~-\phi<\theta<\phi\\
		(-y(3x^2+y^2), x(x^2-y^2)), ~~&\text{if}~~\phi<\theta<2\pi-\phi.
	\end{cases}
\end{align}
\begin{theorem}
	The piecewise-smooth system (\ref{Ch5sys4.15}) has exactly one limit cycle.
\end{theorem}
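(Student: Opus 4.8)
The plan is to reproduce the scheme behind Theorem~\ref{Ch5th5.9}, combining the reflection symmetry about the $x$-axis of the invariant structures of the two zones with the fact that the degenerate center (\ref{Ch5eq3.2}) is a \emph{global} center. First I would record the facts I need about the right-zone field $(\dot x,\dot y)=(-y(3x^{2}+y^{2}),x(x^{2}-y^{2}))$: in polar coordinates $\dot\theta=(x\dot y-y\dot x)/r^{2}=x^{2}+y^{2}>0$, so every nontrivial orbit winds monotonically around the origin; and its first integral $I_{2}(x,y)=\tfrac12\ln(x^{2}+y^{2})-\tfrac{x^{2}}{x^{2}+y^{2}}$ is even in $y$ and, restricted to any ray through the origin, strictly increasing in $r$ from $-\infty$ to $+\infty$. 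Hence (\ref{Ch5eq3.2}) is a global center whose orbits are nested closed curves filling $\mathbb{R}^{2}\setminus\{0\}$, each symmetric with respect to the $x$-axis.

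Next I would reduce to crossing cycles. A periodic orbit of (\ref{Ch5sys4.15}) cannot lie inside a single zone: the linear saddle (\ref{Ch5sys4.1}) has no periodic orbit, and by the above every orbit of (\ref{Ch5eq3.2}) surrounds the origin and hence meets the wedge $-\phi<\theta<\phi$. Thus any limit cycle $\Gamma$ crosses both rays $\theta=\phi$ and $\theta=-\phi$, and, since $\dot\theta>0$ on the right and the transition map of the saddle is monotone on the left, it meets each of these rays in a single point, say $(r_{1},\phi)$ and $(r_{2},-\phi)$. The portion of $\Gamma$ lying in $\phi<\theta<2\pi-\phi$ is an arc of an orbit of (\ref{Ch5eq3.2}); that orbit is symmetric about the $x$-axis and closes up, so it joins $(r_{1},\phi)$ to its mirror point, which forces $r_{2}=r_{1}$.

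It then remains to treat the left zone. As in Theorem~\ref{Ch5th5.9}, the candidate limit cycle consists of an arc of saddle separatrices of (\ref{Ch5sys4.1}) inside the wedge, running from a separatrix-ray intersection on $\theta=\phi$ along the stable branch to the saddle and out along an unstable branch to a separatrix-ray intersection on $\theta=-\phi$, glued to the orbit of (\ref{Ch5eq3.2}) through the first endpoint. As recorded earlier in this section, the separatrices $\tfrac{y-\beta}{x-\alpha}=\pm1$ meet the rays $\theta=\pm\phi$ at points symmetric with respect to the $x$-axis; since the orbit of (\ref{Ch5eq3.2}) through a point $(r_{1},\phi)$ is symmetric about the $x$-axis and always closes (the center is global), it returns at $\theta=-\phi$ to the $x$-mirror point, which is exactly the matching separatrix endpoint. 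So the two pieces fit together into a single closed curve $\Gamma_{0}$; in contrast with the companion statement for system (\ref{Ch5sys4.14}), whose center (\ref{Ch5eq3.1}) is not global, this gluing succeeds for \emph{every} $\alpha,\beta$. Finally I would exclude any other limit cycle: a non-separatrix orbit arc of (\ref{Ch5sys4.1}) across the wedge meets $\theta=\phi$ and $\theta=-\phi$ at points of different radius (the transition map of a linear saddle across a wedge has no interior fixed point), hence cannot be matched by the symmetric center arc; so $\Gamma_{0}$ is the only limit cycle, and it is isolated since a separatrix is an isolated orbit.

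The step I expect to be the main obstacle is exactly this last left-zone analysis: writing down the transition map of the linear saddle across the wedge (in eigencoordinates it is essentially projective), checking that it has no interior fixed point, and verifying that exactly one combination of stable and unstable separatrix branches yields a flow-admissible closed curve for the switching orientation on $\theta=\pm\phi$ (and that (\ref{Ch5sys4.15}) carries no sliding periodic orbit). This bookkeeping is sensitive to the position of the saddle point $(\alpha,\beta)$ relative to the wedge and to whether $\phi<\tfrac\pi4$ or $\phi\ge\tfrac\pi4$; the rest is a direct transcription of the arguments behind Theorems~\ref{Ch5th5.9} and~\ref{Ch5th5.5}.
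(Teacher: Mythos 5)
Your proposal is correct and follows essentially the same route as the paper: the paper's proof (by reference to Theorem~\ref{Ch5th5.9}) is precisely the symmetry argument you give --- the saddle separatrices of (\ref{Ch5sys4.1}) meet the rays $\theta=\pm\phi$ at $x$-axis--symmetric points while non-separatrix saddle arcs do not (the $y_1\neq -y_2$ observation preceding Theorem~\ref{Ch5th5.9}), and the degenerate center (\ref{Ch5eq3.2}) is a global center whose orbits are symmetric about the $x$-axis, so exactly one closed curve (separatrix arcs glued to a center orbit) can occur. You simply make explicit the monotonicity, matching, and admissibility details that the paper compresses into two sentences.
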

\begin{proof}
	The proof is similar to the proof of Theorem \ref{Ch5th5.9}.
\end{proof}
\section{{Acknowledgement}}
The authors would like to express sincere gratitude to the reviewers for their valuable suggestions and comments.
\section{Conflict of interest statement}
No funding was received for conducting this study. The authors have no financial or proprietary interests in any material discussed in this article.


\end{document}